\theoremstyle{plain}
\newtheorem{theorem}{Theorem}[section]
\newtheorem{proposition}[theorem]{Proposition}
\newtheorem{lemma}[theorem]{Lemma}
\newtheorem{corollary}[theorem]{Corollary}
\newtheorem{definition}[theorem]{Definition}
\newtheorem{remark}[theorem]{Remark}
\def\int{\text{interior}}
\def\hyp {\hbox {\rm {H \kern -2.8ex I}\kern 1.25ex}}
\def\reals {\hbox {\rm {R \kern -2.8ex I}\kern 1.15ex}}
\def\integers {\hbox {\rm { Z \kern -2.8ex Z}\kern 1.15ex}}
\def\naturals {\hbox {\rm {N \kern -2.8ex I}\kern 1.20ex}}
\def\rationals {\hbox {\rm { Q \kern -2.2ex l}\kern 1.15ex}}
\def\hyp {\hbox {\rm {H \kern -2.7ex I}\kern 1.25ex}}
\def\strutdepth{\dp\strutbox}
\def \ss{\strut\vadjust{\kern-\strutdepth \sss}}
\def \sss{\vtop to \strutdepth{
\baselineskip\strutdepth\vss\llap{$\diamondsuit\;\;$}\null}}
\def\strutdepth{\dp\strutbox}
\def \sst{\strut\vadjust{\kern-\strutdepth \ssss}}
\def \ssss{\vtop to \strutdepth{
\baselineskip\strutdepth\vss\llap{$\spadesuit\;\;$}\null}}
\def\strutdepth{\dp\strutbox}
\def \ssh{\strut\vadjust{\kern-\strutdepth \sssh}}
\def \sssh{\vtop to \strutdepth{
\baselineskip\strutdepth\vss\llap{$\heartsuit\;\;$}\null}}
\begin{document}

 \title{ High distance Heegaard splittings via fat train tracks}

\author{Martin Lustig} 
 
\author{Yoav Moriah}

\thanks {This research is supported by  a grant from the High Council 
for Scientific and Technological Cooperation between France and 
Israel}

\begin{abstract}
We define {\it fat} train tracks and use them to give a combinatorial 
criterion for the Hempel distance of Heegaard splittings for closed 
orientable $3$-manifolds.  We apply this criterion to 3-manifolds 
obtained from surgery on knots in $S^{3}$.
\end{abstract}


\address{Math\'ematiques (LATP)\\
Universit\'e P. C\'ezanne - Aix-Marseille III \\
Ave. Escad. Normandie-Niemen, 13397 Marseille 20, France}
\email{martin.lustig@univ-cezanne.fr}

\address{Department of Mathematics \\
Technion \\
Haifa, 32000 Israel}
\email{ymoriah@tx.technion.ac.il}

\keywords{Train tracks, Curve complex, Heegaard distance, Heegaard splittings}

\maketitle

\section{Introduction}\label{introduction}

In this paper we derive some combinatorial tools which are useful in
determining estimates for the distance of Heegaard splittings of  
closed
oriented 3-dimensional manifolds.  In particular, we prove:

\vskip8pt

\noindent
{\bf Theorem \ref{secondsurgery}'.} 
{\it  For any $n \geq 0$ there   exist a knot $k \in S^{3}$ and an integer $m_{0}$, such that for  any
$m \in \integers \smallsetminus \{m_{0}, m_{0}+1, m_{0} +   2, m_{0}+3\}$ the $3$-manifold  $M(m)$,
obtained by $\frac {1}{m}$-surgery along a horizontal slope on $k$,  admits a  Heegaard splitting 
$ M(m) = V \cup_{\Sigma} W$   which satisfies $$d(V, W) \geq n \, .$$ }

Here $V \subset M(m)$ and $W \subset M(m)$ are handlebodies which  intersect in 
$\partial V = \Sigma = \partial W$.  Each of them, $V$ and $W$, determines an infinite subcomlex 
$\mathcal{D}(V)$ and $\mathcal{D}  (W)$ in the curve complex $\mathcal{C}(\Sigma)$, and the 
distance $d(V, W)$ is defined as minimal distance in $\mathcal{C}(\Sigma)$ between a vertex  
$D \in \mathcal{D}(V)$ and a vertex $E \in \mathcal{D}(W)$. The curve complex $\mathcal{C}  (\Sigma)$
and its metric $d$ are defined at the beginning of Section 2.

\vskip 10pt

The main combinatorial tool, to prove this result, is the following   theorem, which is derived
from some basic observations that can already be found in the work of   Masur-Minsky \cite{MM},
such as in particular the use of nested train track towers.

\vskip 10pt

\noindent
{\bf Theorem \ref{HSdistance}.}
{\it Let $M$ be an oriented $3$-manifold with a Heegaard splitting $M   = V \cup_{\Sigma} W$.
Consider complete decomposing systems $\mathcal D \in \mathcal{CDS}(V) $ and
$\mathcal E \in \mathcal{CDS}(W)$ which do not have waves with  respect to each other.
Let $\tau \subset \Sigma$ be a complete fat train track with   exceptional fibers
$\mathcal{E}_{\tau} = \mathcal E$, and assume that $\mathcal D$ is   carried by $\tau_{n}$,
for some $n$-tower of derived train tracks $\tau = \tau_{0} \supset \tau_{1} \supset \ldots \supset \tau_{n}$  
with $n \geq 2$.  Then the distance of the  given Heegaard splitting satisfies: 
$$d(V, W) \geq n$$ }

The technical terms of this theorem are explained in detail in   Sections 2, 3 and 4.
A innovative  feature of this paper are {\it fat} train tracks   (compare Definitions   \ref{fatraintrack} 
and \ref{completefatraintrack}), which allow a  smooth transition from disk systems in a handlebody to the train track technology needed to   obtain lower distance bounds in the curve complex.

\vskip5pt

This train track technology, presented below in Section 2, is based on our concept of a {\it derived} train track, which is a combinatorial   version of Masur-Minsky's ``Basic observation'' (\cite{MM}, \S 3.1) that turns out to be very handy in our context.

\vskip5pt

Special effort was made to give all proofs of this paper from basic notions.

\vskip8pt

\noindent
{\it Acknowledgments:}
The authours would like to thank Yair Minsky and Saul Schleimer for some very helpful comments, in particular for gently pointing out to us that some of our original efforts were already known to the  
experts in the field.

\section{Train tracks and distance in the Curve Complex}{\label{crvcmplx}

\vskip4pt

Recall that for an orientable connected surface $\Sigma$ of  genus  $g \geq 2$ the curve 
complex $\mathcal {C}(\Sigma)$  is defined as follows:
\begin{enumerate}
\item The set of vertices $\mathcal{C}^0(\Sigma)$ is the set of  isotopy classes of simple closed 
curves on $\Sigma$.
\item  An $n$-simplex is a collection $\{v_0, ..., v_{n}\}$ of   vertices which have mutually disjoint representative curves. 
\end{enumerate}

On the $1$-skeleton of $\mathcal {C}^1 (\Sigma)$ we define a metric   $d_{\mathcal{C}}( \cdot,\cdot)$  
by declaring the length of every edge to be $1$. For the purpose of  this paper it will suffice to consider 
only  $\mathcal {C}^1 (\Sigma)$. 

\vskip5pt

The curve complex $\mathcal{C}(\Sigma)$ was invented by Harvey ~\cite{Har} and has been 
the object of intense research in recent years (see \cite{AS}, \cite{He}, \cite{Kl}, \cite{Ko}, 
\cite{MM}, \cite{MM1}, \cite{MMS}, \cite{Sc}, just to mention a few from a much longer list). Its  importance is highlightened by the  fact that this complex, on  which the mapping class group  $MCG(\Sigma)$ 
acts naturally (but not properly, as  $\mathcal{C}(\Sigma)$ is not locally compact), is indeed a  
$\delta$-hyperbolic space in the sense of Gromov (see ~\cite{MM}). For background and more information  on the curve complex see ~\cite{Sc}.

\vskip10pt 

We now briefly review {\it train tracks} from a somewhat non-standard  perspective which will be 
used in the next two Sections:

\vskip10pt

A {\it train track} $\tau$ in a closed surface $\Sigma$ is a closed  subsurface with a 
{\it singular $I$-fibration}. By this we mean that  the interior of $\tau$ is fibered by open arcs  (each  homeomorphic to the interior of the unit interval $I$) and   that the  fibration extends to a fibration of the closed surface  $\tau$  by  properly  embedded closed arcs, except for finitely many {\it singular points  
on $\partial \tau$},  where precisely two  fibers meet. We call these  fibers  {\it singular fibers}. We admit the case that a fiber is {\it  doubly  singular}, i.e., both of  its endpoints  are singular. 

Two singular fibers are {\it adjacent} if they share a singular   point  as a common endpoint. A maximal connected union of singular  or  doubly singular $I$-fibers is called an {\it exceptional fiber}.  It  consists 
of a sequence of adjacent fibres, and hence it is either  homeomorphic  to a closed  interval, or to a simple closed curve on  $\Sigma$.  In  the latter case it will be called a {\it  cyclic  exceptional fiber}. 
We explicitely admit this second case, although we are aware of the fact that in the classical  train track literature this case is sometimes   suppressed.

We picture a singular point $P \in \partial \tau$ in such a way that   two arcs from $\partial \tau$ which intersect in $P$ converge towards  $P$ from the ``same direction", thus giving  rise to a {\it cusp point} 
on the boundary of the corresponding complementary component  of  $\tau$ in $\Sigma$.

We define the {\it type} of a complementary component $\Delta$ of   $\tau$ in  $\Sigma$ as given by 
the genus of $\Delta$ and the number of cusp points on its boundary.   If $\Delta$ is simply connected, we speak of  an $n$-gon if there are  precisely $n$ cusp points on $\partial  \Delta$.  For example, if 
$\partial \Delta$ contains precisely three  cusp points, we say that  $\Delta$ is a {\it triangle}. An arc of  
$\partial \Delta$ which joins two adjacent cusp points is called a    {\it side} of $\Delta$.  If all complementary regions  of the train track $\tau$ are triangles we  say that  $\tau$ is {\it maximal}.

\vskip8pt

\begin{remark} \label{traintrack} \rm 
Contracting every $I$-fiber to  a point defines a deformation of    $\tau$ to a graph  $\Gamma_{\tau}$. 
The exceptional fibers will be  deformed to the vertices of  $\Gamma_{\tau}$. The $I$-fiber structure  
on $\tau$  defines a gate structure on $\Gamma_{\tau}$,  which is  usually  visualized by giving 
$\Gamma_{\tau}$ the structure of  a branched  $1$-manifold.

The deformation $\tau \rightarrow \Gamma_{\tau}$ is a homotopy   equivalence only if there are no
cyclic exceptional fibers. Also, $\Gamma_{\tau}$ can be embedded as   a  retract into $\tau$ only if 
every exceptional fiber is {\it simple}, i.e.,  it consists of    exactly two singular $I$-fibers. In this case the corresponding   vertex of $\Gamma_{\tau}$ can be  mapped to the unique cusp point  $P$  contained in  
the exceptional fiber. Notice that even in this case the subsurface   $\tau \subset \Sigma$ fails to be a regular neighborhood of the   embedded graph $\Gamma_{\tau} \subset \Sigma$,  exactly at the 
singular  points of $\partial \tau$.

\end{remark}

\vskip8pt

\begin{remark} \label{simplyconnected}\rm
If $\tau$ is connected and has at least one exceptional fiber, then any closed curve $c \subset \tau$ which does not meet any of  the  exceptional fibers of $\tau$ is  contractible in $\tau$.  This 
follows directly from the fact that for  the above homotopy  equivalence $\tau \rightarrow \Gamma_{\tau}$ the image of $c$ is  a loop in  the graph $\Gamma_{\tau}$ that is disjoint from all  vertices of  $\tau$.
Note that $\Gamma_{\tau}$ is connected and  topologically different from $S^{1}$, by the assumption on  
$\tau$.

\end{remark}

\vskip8pt

An arc or a closed curve   in $\Sigma$ {\it is carried} by a  train track  $\tau \subset \Sigma$ if it
is contained in $\tau$ and is always transverse to the  $I$-fibers in $\tau$.  A simple closed curve 
{\it can be carried}  by  $\tau$ if it can be isotoped in $\Sigma$ to a curve that is carried   by $\tau$.
The two notions are not always being kept strictly apart, as most of the  time one is interested in curves only up to isotopy on $\Sigma$.

Two simple arcs or curves carried by $\tau$ are {\it parallel} if  they intersect  the same $I$-fibers, 
and these intersections occur on the two arcs (or curves) in   precisely the same order.

\vskip8pt

\begin{definition}\label{covers} \rm Let $\tau$ be a train track in 
a  surface $\Sigma$. An arc or a closed curve  on $\Sigma$ which is  carried by $\tau$ is  said to 
{\it cover} $\tau$ if  it meets every $I$-fiber of $\tau$.

\end{definition}

\vskip8pt

The statement of Lemma \ref{disjointcarried} below has been first observed in \cite{MM}, Section  3.1, 
``A basic observation''.  We include here a detailed proof of  this and the preceding Lemma 
\ref{entireside}, as we  will return later, in Section 3, to the same argument, which will be 
generalized (see the proof of Lemma \ref{disjointcarried2}).

\vskip8pt

\begin{lemma}\label{entireside}  
Let $\tau \subset \Sigma$ be a train track, and let $\mathcal{D}$ be  a finite collection of pairwise disjoint simple closed curves  which  together cover $\tau$. Then for every complementary component 
$\Delta$ of $\tau$ in $\Sigma$  and every side $\delta$ of $\Delta$  there is an arc $\alpha$ on some curve  of  $\mathcal D$ that runs parallel along all of $\delta$.

\end{lemma}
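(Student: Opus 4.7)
My approach is to study the strip of $I$-fibers of $\tau$ adjacent to $\delta$ and read off $\alpha$ as the lowest intersection with $\mathcal{D}$. Concretely, I would let $R_\delta \subset \tau$ denote the closure of the union of all $I$-fibers one of whose endpoints lies in the interior of $\delta$. Since $\delta$ is a single side of $\Delta$, its interior contains no cusp of $\partial\tau$, so the fibration over it is regular. Including the two singular $I$-fibers at the cusp endpoints $P_1, P_2$ of $\delta$, one checks that $R_\delta$ is homeomorphic to a rectangle $[0,1] \times [0,1]$ in such a way that $\delta = [0,1] \times \{0\}$, the $I$-fibers are $\{t\} \times [0,1]$, the left and right sides $\{0\}\times[0,1]$, $\{1\}\times[0,1]$ are the singular $I$-fibers at $P_1$ and $P_2$, and the top $[0,1]\times\{1\}$ is another arc of $\partial\tau$.

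Next, I would analyze $\mathcal{D} \cap R_\delta$ component by component. After a small isotopy the curves of $\mathcal{D}$ may be assumed to lie in the interior of $\tau$, so each connected component $A$ of $\mathcal{D} \cap R_\delta$ has its endpoints on the vertical sides of $R_\delta$ (never on the horizontal sides $\delta$ and $[0,1]\times\{1\}$, which sit in $\partial\tau$). Since $\mathcal{D}$ is carried by $\tau$, the arc $A$ is transverse to every $I$-fiber, hence the $t$-coordinate has nowhere vanishing derivative along $A$, forcing it to be strictly monotone. Combined with the boundary constraint, $A$ must run from the left singular fiber to the right one, and is the graph of a function $s = f_A(t)$ defined on all of $[0,1]$.

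Because the curves of $\mathcal{D}$ are simple and pairwise disjoint, the finitely many graphs $f_{A_1}, \ldots, f_{A_k}$ obtained this way are pairwise disjoint, and hence totally ordered pointwise: one may assume $f_{A_1}(t) < f_{A_2}(t) < \ldots < f_{A_k}(t)$ for every $t \in [0,1]$. The hypothesis that $\mathcal{D}$ covers $\tau$ forces every $I$-fiber of $R_\delta$ to meet $\mathcal{D}$, so $k \geq 1$. The bottom arc $\alpha := A_1$ is connected, hence lies on a single curve of $\mathcal{D}$, and by construction it meets each $I$-fiber of $R_\delta$ exactly once, in the same order in which those fibers occur along $\delta$. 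This is precisely the meaning of $\alpha$ running parallel along all of $\delta$.

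The main technical step, and the one I expect to require the most care, is the rectangular model of $R_\delta$ at the two cusps $P_1, P_2$, where the fibration degenerates and two singular fibers meet. A secondary subtlety is ruling out ``U-shaped'' arcs in $R_\delta$ with both endpoints on the same singular fiber, but this is immediate from strict monotonicity of $t$ along a transverse arc. Once these two points are in hand, the ordering argument and the identification of $\alpha$ are essentially a one-picture planar argument.
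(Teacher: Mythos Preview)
Your underlying idea --- pick out the arc of $\mathcal D$ nearest to $\delta$ --- is exactly the paper's. The gap is in your rectangle model of $R_\delta$. You correctly note that the interior of $\delta$ contains no cusps, and you flag the two cusps $P_1, P_2$ at the endpoints of $\delta$ as the delicate points. But you miss the more serious issue on the \emph{top} side: the other endpoints of the $I$-fibers over $\delta$ trace out a path in $\partial\tau$ that will, in general, pass through further cusps. At each such cusp $Q$ there is a singular fiber $\sigma'$ with its non-singular endpoint in the interior of $\delta$, and a second singular fiber $\sigma''$ at $Q$ whose other endpoint need not lie on $\delta$ at all. As the nearby fibers of $R_\delta$ approach $\sigma'$ from the side on which they are ``long'', they converge as \emph{sets} to $\sigma' \cup \sigma''$, not to $\sigma'$ alone; so $R_\delta$ fails to be a product rectangle there. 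A curve of $\mathcal D$ can perfectly well cross the exceptional fiber on the $\sigma''$ portion and leave $R_\delta$ into the branch of $\tau$ not adjacent to $\delta$. Hence components of $\mathcal D \cap R_\delta$ may terminate on these interior $\sigma''$ fibers, not only on your two ``vertical sides'', and your claim that every component is a graph over all of $[0,1]$ is false in general.

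This branching is precisely what the paper's argument is built to handle. Rather than asserting a global product structure, the paper fixes one fiber $\sigma$, takes the point $Q \in \mathcal D \cap \sigma$ nearest $\delta$, and argues by contradiction: if the arc $\alpha$ through $Q$ branches off $\delta$ at some singular fiber $\sigma'$, then (since $\mathcal D$ covers $\tau$ and hence meets $\sigma'$) some other arc $\beta$ of $\mathcal D$ crosses $\sigma'$, and its prolongation toward $\sigma$ runs strictly between $\alpha$ and $\delta$, contradicting the minimality of $Q$. Your ordering argument can be repaired along the same lines --- show directly that the \emph{lowest} arc cannot branch off at any interior cusp --- but that step is exactly the substance of the paper's proof, and it genuinely uses the covering hypothesis at $\sigma'$, which your rectangle picture never invokes.
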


\vskip10pt

\begin{proof}
Let  $\sigma$ be an $I$-fiber of $\tau$ that meets $\delta$  in a  point  $P \in \partial \sigma \cap \delta$, and let $Q$ be the point  of $\sigma \cap  {\mathcal D}$ located on $\sigma$ closest to $P$. 
Let $\alpha$  be the maximal arc of $\mathcal D$ that contains $Q$   and runs  parallel to a subarc of 
$\delta$.  If $\alpha$ does not run parallel along all of $\delta$, it branches off at some singular $I$-fiber 
$\sigma'$ which has a (non-singular) endpoint on $\delta$.  Since $\mathcal D$ covers  $\tau$, there must  be another arc $\beta$ of  $\mathcal D$ that meets  $\sigma'$. But then the elongation of 
$\beta$, in the  direction of $\alpha$ towards $\sigma$, will proceed  between $\alpha$ and $\delta$, and thus give an intersection point with $\sigma$  which lies between $P$ and 
$Q$, contradicting the choice  of  $Q$. Thus $\alpha$ cannot  branch off from $\delta$ and hence 
runs  parallel along $\delta$ all  the way.

\end{proof}

\vskip8pt

\begin{lemma}\label{disjointcarried} 
Let $\tau$ be a maximal train track on  a  surface $\Sigma$.  Let   $\mathcal{D}$ be a  finite collection of pairwise disjoint simple  closed curves which together  cover $\tau$, and let  $D$ be an 
essential simple closed curve disjoint from $\mathcal{D}$. Then  $D$ can be  carried by $\tau$. 

\end{lemma}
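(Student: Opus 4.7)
The plan is to isotope $D$ so that afterwards it lies entirely in $\tau$ and is transverse to the $I$-fibration, thus being carried by $\tau$ in the sense of the preceding paragraphs. First I would put $D$ in general position with respect to $\partial\tau$ and, by the standard bigon-removal argument, isotope $D$ so as to minimize the number of transverse intersections $|D\cap\partial\tau|$. Since $D$ is disjoint from $\mathcal{D}\subset\tau$, this isotopy can be arranged to keep $D$ and $\mathcal{D}$ disjoint throughout. After minimization, no arc of $D\cap\Delta$, for any complementary component $\Delta$ of $\tau$ in $\Sigma$, is boundary-parallel in $\Delta$: such an arc together with a subarc of $\partial\Delta$ containing no cusp would bound a bigon across which $D$ could be pushed to reduce $|D\cap\partial\tau|$.

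Using the hypothesis that $\tau$ is maximal, every $\Delta$ is a triangular disk, and an essential arc in a triangle must connect two distinct sides, thereby cutting off exactly one cusp. Hence every arc of $D\cap(\Sigma\smsm\tau)$ cuts off a single cusp $P$ of some complementary triangle $\Delta$, with endpoints on the two sides $\delta_i,\delta_j$ of $\Delta$ adjacent to $P$. I would now push each such arc across its cusp into $\tau$, using Lemma~\ref{entireside} to guarantee the necessary room. Explicitly, by Lemma~\ref{entireside} there are subarcs $\alpha_i,\alpha_j\subset\mathcal{D}$ running inside $\tau$ parallel to the full lengths of $\delta_i$ and $\delta_j$ respectively. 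Together with short segments of $\delta_i,\delta_j$ and of the two singular $I$-fibres emanating from $P$, these parallel arcs bound a disk $E_P\subset\tau$ which, by construction and the hypothesis $D\cap\mathcal{D}=\emptyset$, is free of $\mathcal{D}$ in its interior. A small isotopy supported in a disk neighbourhood of $P$ meeting $E_P$ then replaces the arc $\alpha\subset\Delta$ (together with short extensions of $D$ into $\tau$ near its two endpoints) by an arc $\alpha'$ lying in $\tau$, crossing the two singular fibres at $P$ transversely, and still disjoint from $\mathcal{D}$. This move reduces $|D\cap\partial\tau|$ by $2$.

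Iterating the cusp-push over every arc of $D$ in the complement of $\tau$, one arrives after finitely many steps at an isotoped copy of $D$ lying entirely in $\tau$; a final generic perturbation inside $\tau$ makes $D$ transverse to the $I$-fibration, so that $D$ is carried by $\tau$. The main obstacle in executing this plan is the cusp-push itself: one must know that near every cusp $P$ there is a region inside $\tau$, disjoint from $\mathcal{D}$, into which $\alpha$ can be moved, and that the resulting arc is transverse to the (possibly doubly singular) fibres at $P$. Lemma~\ref{entireside} is exactly what provides this room, since it asserts that $\mathcal{D}$ runs parallel along the \emph{entire} length of each side of $\Delta$, and in particular does not block the immediate vicinity of the cusps.
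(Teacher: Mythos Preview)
Your cusp-push has a genuine problem. Pushing the arc $\alpha$ around the cusp $P$ into $\tau$ forces the new arc $\alpha'$ to make an illegal turn at the switch corresponding to $P$: in the branched $1$-manifold $\Gamma_\tau$ the two sides $\delta_i,\delta_j$ run along the two half-branches that lie on the \emph{same} side of that switch, so an arc passing from one to the other while staying inside $\tau$ must enter the opposite half-branch and then turn back. At the turnaround the transverse coordinate of $D$ has a local extremum, i.e.\ $D$ is tangent to an $I$-fibre there. Such a fold is topologically stable, so no ``final generic perturbation inside $\tau$'' can remove it; a small perturbation either moves the tangency or pushes the curve back out of $\tau$. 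Your invocation of Lemma~\ref{entireside} does produce arcs of $\mathcal D$ parallel to the full sides, but that only shows there is room near the cusp for the push --- it does not make the pushed arc transverse to the fibration. (There is also a minor logical wrinkle: if the cusp-push genuinely lowered $|D\cap\partial\tau|$ while keeping $D$ disjoint from $\mathcal D$, then your initial position was not minimal.)

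The paper resolves the issue by pushing in the opposite direction. It first moves $D$ entirely into $\tau$, accepting exactly the cusp tangencies your move creates, then picks an innermost such tangency $X$ near a cusp $P$ and uses disjointness from $\mathcal D$ together with Lemma~\ref{entireside} to show that the two arcs of $D$ adjacent to $X$ run parallel to the \emph{entire} sides $\delta,\delta'$ of the adjacent triangle $\Delta$. One then isotopes $\hat\alpha\cup\hat\alpha'$ across $\Delta$ to a single arc parallel to the third side $\delta''$, which \emph{is} a legal carried arc, and the singularity disappears. Thus the essential role of Lemma~\ref{entireside} is not to clear space at the cusp but to certify that $D$ follows both adjacent sides all the way to the far cusps, so that the push across the triangle is available.
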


\vskip8pt

\begin{proof}  

We can isotope $D$ off all complementary components of $\tau$  so   that, after the isotopy, the image 
of $D$ in $\tau$  will still be  disjoint from $\mathcal D$.  This   can be done at the expense of accepting 
that at finitely many singular points the curve $D$  is not  transverse to the   $I$-fibering. We can  now shorten $D$ at those  singularities and  thus  eventually eliminate  all of them except  those that occur in 
the  neighborhood of  a cusp  point $P$ of some complementary   component  $\Delta$.  In this  case there   are two arcs $\alpha$ and  $\alpha'$ on $D$ adjacent to such a  singularity $X \in D$, which run 
parallel to initial segments of the  two sides $\delta$  and  $\delta'$ of $\Delta$ that are adjacent to  $P$. 
Choose such a singularity $X$ and  two such arcs $\alpha$ and  $\alpha'$ which are closest to  $\delta$  and $\delta'$   respectively.  Notice that it is impossible that there is a curve from $\mathcal D$ 
that passes between $\alpha$ and $\delta$ or  between $\alpha'$ and $\delta'$, as the singularity $X$ lies on  both, $\alpha$ and $\alpha'\, $:  Since $D$ is disjoint from  $\mathcal D$ and the latter is carried by $\tau$, $X$ cannot be separated from the cusp point $P$ by  curves from $\mathcal D$.

By Lemma  \ref{entireside}, applied to $\mathcal D$, the arcs
$\alpha$ and $\alpha'$ can  be elongated to arcs $\hat \alpha$  and  $\hat \alpha'$ of $D$ that 
are  parallel to the entire sides   $\delta$ and $\delta'$  respectively.  But then we can isotope the  union  $\hat \alpha \cup  \hat \alpha'$ across $\Delta$ to get an arc $\alpha''$ that runs     parallel to the third side $\delta''$ of $\Delta$, thus eliminating  the singularity $X \in D$.  Finitely many successive such isotopies   bring $D$ into a position where it is contained in  $\tau$ and has no   singularities with respect to the $I$-fibering:   It is thus carried   by $\tau$.

\end{proof}

\vskip8pt

\begin{remark}\label{lamination1}\rm
Assume that $\Sigma$ is endowed with some hyperbolic structure.   Recall that a {\it (geodesic) lamination}  $\mathcal{L}$  is a  non-empty closed  subset of $\Sigma $ which is a  disjoint union of 
simple geodesics  called {\it leaves}  (see, e.g., Casson-Bleiler  ~\cite{CB}). We say that $\mathcal{L}$ 
is {\it carried} by a train track $\tau$  if each  of its leaves is  carried by $\tau$. An example, which will do for most of  the rest of this paper and doesn't need the hyperbolic structure on $\Sigma$, is simply to take for $\mathcal{L}$ a  (typically rather long) essential simple closed curve, or a finite pairwise  disjoint collection of such.

\end{remark}

\vskip8pt

Given a train track $\tau \subset \Sigma$ which carries a lamination   $\mathcal{L}$ we can obtain 
a new train track as follows:

\vskip8pt

The  train track $\tau$ can be {\it split}  by moving any of the cusp  points $P$  (now called a {\it zipper}), 
which is located on the boundary of a complementary component  $\Delta$ of $\tau$,  in an ``outward" direction with  respect to $\Delta$. In other words, $P$ is moved into the interior of $\tau$. 
The  zipper  $P$ will move along an {\it unzipping path}, which is embedded in the  interior of 
$\tau \smallsetminus \mathcal{L}$. Furthermore the unzipping path is  always  transverse to the 
$I$-fibers.   Two unzipping paths are not allowed to cross each  other.  In case two  zippers meet the same connected component of an  $I$-fiber  in  $\tau\smallsetminus {\mathcal L}$ from different directions, they have to join  up, thus   changing the topology of the train track and of its 
complementary   components.   A situation like this is called a {\it  collision}.  In case of a collision the unzipping  procedure stops. An  unzipping path which covers $\tau$ is called  {\it complete}. 

\vskip8pt

\begin{definition} \label{derivedtt} \rm We say that $\tau$ can be   derived with respect  to 
$\mathcal L$ if we can successively (or  simultaneously, it does not  make any difference) 
unzip every zipper  along a complete unzipping path, without ever  running into a  collision.  
The train  track $\tau'$   obtained by unzipping along paths which  are shortest possible,  complete unzipping paths is  said to be  {\it  derived from $\tau$ with respect to $\mathcal L$}, or simply,  if the 
context is clear, {\it  derived from  $\tau$}.  

\end{definition}

\vskip8pt

\begin{remark}\label{sametype} \rm If the train track $\tau'$ is   derived from a train track $\tau$ then  
every complementary component $\Delta$ of $\tau'$ is of the same type  as the complementary component  of $\tau$ which is contained in  $\Delta$. This follows directly from  Definition \ref{derivedtt},  
since  during the deriving process the unzipping paths never run into collisions. In particular, if $\tau$ is maximal, then so is $\tau'$.

\end{remark}

\vskip8pt

\begin{lemma} \label{carriesimpliesunzip} 
Given a surface $\Sigma$ and  maximal train tracks  $\tau, \tau' \subset  \Sigma$  so that  $\tau'$ is derived from $\tau$, let  $D$ be a simple closed  essential curve carried by $\tau'$.  Then $D$  covers $\tau$.

\end{lemma}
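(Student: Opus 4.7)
The plan is to argue by contradiction. Suppose $D$ is a simple closed essential curve carried by $\tau'$ that does not cover $\tau$, so that $D \cap \sigma = \emptyset$ for some $I$-fiber $\sigma$ of $\tau$.

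First I would set up the structural picture. By Remark \ref{sametype} together with Definition \ref{derivedtt}, one has $\tau' \subset \tau$ as subsurfaces of $\Sigma$: each complementary component of $\tau$ sits inside a complementary component of $\tau'$, the latter obtained from the former by absorbing the thin slit produced by the adjacent unzipping path. Correspondingly, every $I$-fiber of $\tau'$ is a sub-arc of an $I$-fiber of $\tau$, cut at the points where the unzipping paths cross it transversely. Moreover, each unzipping path $\gamma_P$ (one per cusp $P$ of $\tau$) is complete, so it meets every $I$-fiber of $\tau$ including $\sigma$, and after the derivation each $\gamma_P$ lies on $\partial \tau'$. Hence $D$ is disjoint from every $\gamma_P$, giving
\[
D \;\subset\; \tau \smallsetminus \Bigl(\sigma \cup \bigcup_P \gamma_P\Bigr).
\]

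The heart of the proof, and its main difficulty, is to show that this set cannot contain any essential simple closed curve of $\Sigma$. My approach is topological: cutting $\tau$ along every complete unzipping path $\gamma_P$ kills a substantial part of $\pi_1(\tau)$, because $\gamma_P$ is transverse to every $I$-fiber of $\tau$ and therefore traverses every branch of $\tau$, dragging essential loops across its slit. I would carry out a branch-by-branch analysis: within each branch $B$ of $\tau$, the unzipping paths make transverse cuts into rectangular pieces which are then glued at the end-switches of $B$ and at the interior endpoints $Q_P$ of the unzipping paths (the new cusps of $\tau'$). Tracking how these pieces assemble in $\tau \smallsetminus \bigcup_P \gamma_P$, and then further removing $\sigma$, one should verify that what remains is a disjoint union of simply connected subspaces of $\tau$. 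Since $\tau$ is maximal, every complementary component of $\tau$ in $\Sigma$ is a disk, so a null-homotopy of $D$ in $\tau$ extends to one in $\Sigma$, contradicting the essentiality of $D$.

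The principal obstacle is making this topological reduction precise, as the combinatorics of how the slits fit together at the new cusps $Q_P$ require careful bookkeeping. Should the direct enumeration of pieces prove unwieldy, an alternative is to translate the problem into switch-condition arithmetic: view $D$ as a system of non-negative integer weights on the branches of $\tau'$ satisfying the switch conditions, and argue that $D \cap \sigma = \emptyset$ forces the weights on the $\tau'$-sub-branches of $\sigma$ to vanish, whereupon the switch conditions at the new cusps $Q_P$ propagate these zero weights throughout $\tau'$, contradicting the existence of the non-trivial curve $D$.
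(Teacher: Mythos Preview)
Your setup is sound: a curve carried by $\tau'$ can be pushed into the interior of $\tau'$ and hence off every unzipping path $\gamma_P$, so the containment $D \subset \tau \smallsetminus (\sigma \cup \bigcup_P \gamma_P)$ is correct. But the step on which everything rests --- that this complement is a disjoint union of simply connected pieces --- is neither proved nor evidently true. Removing the $\gamma_P$ from $\tau$ recovers, up to homotopy, the train track $\tau'$ itself, which for a closed genus-$g$ surface has first Betti number $6g-5$; you then remove from it only the $\tau'$-sub-fibers of the single $\tau$-fiber $\sigma$. An Euler-characteristic count is \emph{consistent} with the result being a forest (each of the $12g-12$ complete unzipping paths crosses $\sigma$, yielding at least $12g-11$ cuts), but the count does not control where those cuts fall and hence cannot by itself force $b_1=0$ on every component. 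Your ``branch-by-branch analysis'' would have to supply exactly this, and you give no indication how.

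The weight-propagation alternative has the same defect. At a trivalent switch with branch $a$ on one side and $b,c$ on the other, the relation $w(a)=w(b)+w(c)$ forces $w(b)=w(c)=0$ from $w(a)=0$, but from $w(b)=0$ alone one obtains only $w(a)=w(c)$. So zeros propagate in one direction only, and nothing in your sketch drives them from the $\sigma$-sub-branches through all of $\tau'$.

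The paper's argument is direct and avoids global piece-counting. Since $D$ is essential and carried by $\tau'$, Remark~\ref{simplyconnected} forces $D$ to meet some singular $I$-fiber of $\tau'$, hence to run past a cusp $P$ of a complementary triangle $\Delta$ of $\tau'$. One follows an arc $\alpha \subset D$ along the side $\delta$ of $\Delta$ adjacent to $P$; the unzipping path $\zeta$ terminating at $P$ runs parallel (in $\tau$) to an initial segment of $\delta$. If $\alpha$ were to branch off $\delta$ before reaching the end of $\zeta$, it would do so at another cusp of $\tau'$ whose unzipping path is strictly shorter than $\zeta$; choosing $P$ at the outset so that $\zeta$ is \emph{shortest} among all unzipping paths excludes this. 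Hence $\alpha$ runs parallel to all of $\zeta$, and since $\zeta$ is complete, $D$ covers $\tau$.
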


\vskip8pt

\begin{proof} 

Since $D$ is essential, it has to meet some singular $I$-fiber  $\sigma$ of  $\tau'$, by Remark 
\ref{simplyconnected}. Let $P \in \partial \sigma$ be a singular   point: $P$ is also a cusp point on a  complementary component   $\Delta$   of $\tau'$. Let $\delta$ be the side of $\Delta$  adjacent to $P$, on  the side of  $\sigma$. Let $\alpha$ be an arc on $D$ which starts at   $\sigma$ and runs parallel to an initial  segment of  $\delta$.

By the assumption that $\tau'$ is derived from $\tau$, the cusp  point  $P$ is the endpoint  of some unzipping path  $\zeta \subset  \Delta$   which runs parallel in $\tau$ to an initial segment of 
$\delta$,   starting at $P$.  There are two cases:   

\begin{enumerate}

\item[(i)] The path $\alpha$ runs parallel in $\tau$ along all of  the  unzipping  path $\zeta$.

\vskip5pt

\item[(ii)] There is another singular $I$-fiber of $\tau'$ which has  its non-singular endpoint $P'$ on 
$\delta$, while its singular  endpoint is a cusp point $Q$ of another  complementary component 
$\Delta'$ of $\tau'$.  The complementary component $\Delta'$ has two  sides  $\epsilon$  and  $\delta'$ adjacent to $Q$,  such that  the  beginning of $\delta'$  runs parallel to  $\delta$  while $\alpha$  runs along  $\epsilon$.  (i.e., $\alpha$ ``splits''  off $\delta$ at  $Q$.)

\end{enumerate} 

\noindent Now either $\epsilon$  and $\delta'$ (and thus $\delta$)   run parallel on $\tau$ until the 
end of the  unzipping path $\zeta$ is reached, or else the  unzipping  path $\zeta'$  for $\Delta'$ which separates  $\epsilon$  from  $\delta'$ is strictly  shorter than $\zeta$ (where ``length'' is measured in terms of the transverse $I$-fibers that are being  intersected by $\zeta$ and by $\zeta'$). By  this we mean that  there is a proper subpath  of $\zeta$ which is parallel on $\tau$ to  $\zeta'$. We can rule out  this case by assuming at the beginning of  this proof that the arc  $\alpha$ on $D$ starts at a shortest 
possible  unzipping path.   Thus  an initial segment $\epsilon'$ of  $\epsilon$ has to run parallel in 
$\tau$ to some unzipping  path  $\zeta$. If $\alpha$ branches again  off  $\epsilon$ before the end  of 
$\epsilon'$ is reached, we repeat   the exact same argument just given.

This shows  that $\alpha$ has to run parallel to some unzipping  path  for  $\tau'$, and hence it covers 
$\tau$, by the properties of a derived train track as given in    Definition \ref{derivedtt}.

\end{proof}

\vskip8pt

\begin{remark}\label{weaklyderived}\rm   
Since Lemma \ref{carriesimpliesunzip} is the crucial place in this  paper  where we use the properties of a derived train track, it is  natural to  ask whether a weaker condition may still allow the same 
conclusion. Indeed, one  can work with a train track $\tau'$ that is {\it weakly  derived} from  a given train track $\tau$, by which we mean that  $\tau'$ is obtained  by unzipping $\tau$, and that every circuit 
carried by $\tau'$  covers $\tau$.  Here a {\it circuit} means a loop which  embeds under the natural map 
$\tau \to \Gamma_{\tau}$.

As there are only finitely many circuits in any train track, this  definition is still practial to check, and the results of this paper  generalize  to this weaker (but more cumbersome) notion. 

\end{remark}

\vskip8pt

\begin{definition}\label{tower}\rm   
A collection of train tracks  $\tau_{0} \supset \tau_{1} \supset   \ldots \supset\tau_{n}$ will be  
called an  {\it $n$-tower  of derived train tracks in $\Sigma$} if    there is  some lamination  
$\mathcal{L} \subset \Sigma$ so that  $\tau_{i}$ is  derived from   $\tau_{i-1}$ with respect  to  
$\mathcal{L}$ for all $i  = 1, \ldots, n$.

\end{definition}

\vskip8pt

Sequences of nested train tracks, as given in the previous definition,  occur already in \cite{MM}, \S 3.1, where they are used to derive  lower bounds for the distance in the curve complex.  Indeed, the 
following  proposition is a variant of their basic observation, presented here in the slightly more combinatorial setting which we prefer in this paper, for the  benefit of our use in the later sections in the context of Heegaard splittings of 3-manifolds.

\vskip8pt

\begin{proposition}  \label{towerimpliesdistance} 
Let $\tau_{0} \supset \tau_{1} \supset \ldots \supset\tau_{n}$  be  an  $n$-tower of derived  train tracks 
 in $\Sigma$. Assume that  $\tau_{0}$   (and hence of any of the  $\tau_{i}$) is maximal.  Let  $D$ be a simple closed curve  carried  by $\tau_n$.  Then any  essential simple closed curve $D'$   which 
satisfies  $d_{\mathcal{C}}(D, D')\leq n$ can be carried by  $\tau_{0}$. 

\end{proposition}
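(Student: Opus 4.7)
The plan is to induct on $k := d_{\mathcal{C}}(D, D') \leq n$, walking along a geodesic edge-path $D = D_0, D_1, \ldots, D_k = D'$ in $\mathcal{C}(\Sigma)$ (so that consecutive $D_i, D_{i+1}$ are disjoint essential simple closed curves), and showing by induction on $i$ that $D_i$ can be carried by $\tau_{n-i}$. Once this is established, $D' = D_k$ will be carried by $\tau_{n-k}$, and since $\tau_{n-k} \subset \tau_0$, the conclusion follows.

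Before starting the induction I would record the preliminary observation that every $\tau_i$ in the tower is maximal: this is immediate from Remark \ref{sametype} together with the maximality hypothesis on $\tau_0$. This is important because both Lemma \ref{disjointcarried} and Lemma \ref{carriesimpliesunzip} require maximality at every level where they are invoked. The base case $i = 0$ of the induction is simply the hypothesis that $D_0 = D$ is carried by $\tau_n$.

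For the inductive step, I assume $D_i$ is carried by $\tau_{n-i}$ and want to conclude that $D_{i+1}$ can be carried by $\tau_{n-i-1}$. The key is the interplay between the two lemmas at consecutive levels of the tower: since $\tau_{n-i} \subset \tau_{n-i-1}$ and $\tau_{n-i}$ is derived from $\tau_{n-i-1}$, Lemma \ref{carriesimpliesunzip} applied to the essential curve $D_i$ promotes the statement ``carried by $\tau_{n-i}$'' into the stronger statement that $D_i$ \emph{covers} $\tau_{n-i-1}$. Now take the single-curve collection $\mathcal{D} := \{D_i\}$, which pairwise-disjointly covers the maximal train track $\tau_{n-i-1}$, and observe that $D_{i+1}$ is an essential simple closed curve disjoint from $\mathcal{D}$ (by the edge-path hypothesis). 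Lemma \ref{disjointcarried} then yields that $D_{i+1}$ can be carried by $\tau_{n-i-1}$, closing the induction.

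The main obstacle in this plan is conceptual rather than computational, and it lies entirely in arranging the hypotheses so that Lemmas \ref{carriesimpliesunzip} and \ref{disjointcarried} can be applied alternately: one must notice that ``carried by the derived track'' automatically upgrades to ``covers the ambient track,'' which is exactly the covering input needed by the next application of Lemma \ref{disjointcarried}. The tower hypothesis guarantees that this alternation can be iterated $n$ times, which is precisely what allows us to absorb $n$ edges of the path in $\mathcal{C}(\Sigma)$.
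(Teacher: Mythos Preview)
Your proof is correct and follows essentially the same approach as the paper: both alternate Lemma \ref{carriesimpliesunzip} (carried by $\tau_{i+1}$ $\Rightarrow$ covers $\tau_i$) with Lemma \ref{disjointcarried} (covers $\tau_i$ and disjoint $\Rightarrow$ carried by $\tau_i$) along an edge-path in $\mathcal{C}(\Sigma)$. Your write-up is slightly more careful than the paper's---you explicitly invoke Remark \ref{sametype} for maximality at each level and handle the case $k < n$ by noting that a curve carried by $\tau_{n-k}$ is automatically carried by $\tau_0$---but the underlying argument is identical.
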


\vskip8pt

\begin{proof} 
Any curve carried by some of the $\tau_{i+1}$ covers   $\tau_{i}$, by Lemma  \ref {carriesimpliesunzip}. A second curve disjoint from the  first   can be then carried by $\tau_{i}$, by Lemma  \ref{disjointcarried}.  
Hence we see recursively that for any family of  essential simple  closed   curves  
$D = D_{0}, D_{1},  \ldots, D_{n} = D'$, where any subsequent  pair  $D_{i}, D_{i+1}$ is disjoint, each $D_{i}$ can be carried by   $\tau_{n-i}$ and covers  $\tau_{n-i-1}$. In particular, $D'$ can be   carried by $\tau_{0}$.

\end{proof}

\vskip8pt

\begin{remark}\rm
The last proposition can be  reformulated to say that any simple closed   essential curve $E$ that cannot be carried  by  $\tau_0$ satisfies    $$d_{\mathcal{C}}(D, E)\geq n+1 \, .$$

\end{remark}

\vskip8pt

Thus, in order to apply this result in practice, we need a  convenient  criterion to ensure that a given curve  $E$ cannot  be carried by the train track $\tau_0$. For this  purpose  we  introduce, in the next section, the notion of a fat  train track.

\vskip10pt

\vskip5pt

\section{Fat train tracks}\label{fattraintracks}

\vskip4pt

In this section we will investigate a special class of train tracks   that are very useful and natural in the context of this paper.

\vskip10pt

\begin{definition}\label{fatraintrack}\rm 
A train track $\tau \subset \Sigma$ is called {\it fat} if all of its  exceptional fibers are cyclic. We denote by 
${\mathcal E}_{\tau}$ the collection of simple closed curves on   $\Sigma$ given by the exceptional fibers of  $\tau$.

\end{definition}

\vskip5pt

A system $\mathcal E$ of essential simple closed curves on $\Sigma$ is called  a {\it complete decomposing system} if every complementary component  of  $\mathcal E$ in $\Sigma$ is 
a  {\it pair-of-pants}, i.e., a sphere with three open  disks removed. 

If a complete decomposing system is given, then  we will assume that any essential simple closed curves $D$ on $\Sigma$  that is considered  shall be transverse to $\mathcal E$. Similarly, any simple arc $\alpha$ must be  transverse to $\mathcal E$, and we will only consider arcs that have 
their endpoints on $\mathcal E$. As before, such $D$ or $\alpha$ are considered only up to isotopy: 
However, we will only allow isotopies of the pair $(\Sigma, \mathcal E)$. 

A curve $D$ or such an arc $\alpha$  is called {\it tight} with respect to $\mathcal E$  if the 
number of  intersection points with $\mathcal E$ cannot be strictly decreased  by an isotopy of 
$D$ or $\alpha$.

\begin{definition}\label{smallwave}\rm  
Let $P \subset \Sigma$ be a pair-of-pants.

\vskip4pt
\noindent
(a)   A simple arc in $P$ which has its two endpoints on different   components of $\partial P$ will be called a {\it seam}.

\vskip4pt
\noindent
(b)  A simple arc in $P$ which has  both endpoints on the same component of $\partial P$, and is not 
$\partial$-parallel, will be called a {\it wave}.

\vskip4pt
\noindent
(c)   An essential simple closed  curve $D \subset \Sigma$ has a {\it  wave} with respect to a complete decomposing system $\mathcal{E}  \subset \Sigma$ if $D$  is tight with respect to $\mathcal E$ and if it
contains a subarc  that is a wave in a  complementary component $P_{i}$ of  $\mathcal{E}$ in 
$\Sigma$.  

\vskip4pt

\noindent 
(d)   An essential simple closed  curve $D \subset \Sigma$ has a {\it  wave} with respect to a  fat 
train track  $\tau$ if $D$ has a wave with respect to ${\mathcal  E}_{\tau}$, or if $D$ is  isotopic
to some  $E_{k} \in {\mathcal E}_{\tau}$.

\end{definition}

\vskip8pt

Let $P \subset \Sigma$ be a pair-of-pants. Place a {\it vertex} on  each of  its three boundary components and connect any two of them by  disjoint  simple arcs.  The  three arcs together bound  a subsurface $\Delta_1 \subset P$, called  a {\it triangle}. Repeat this operation on 
$P  \smallsetminus  \Delta_1$  to obtain a second such triangle denoted  by $\Delta_2$. 
Note that $P \smallsetminus \int(\Delta_1 \cup \Delta_2)$ is a  collection of  three {\it rectangles} 
$R_1, R_2$ and  $R_3$, since the boundary of each of $R_j$ (for  $j =  1,2,3$) is  composed of four 
arcs separated by four  of the six vertices introduced above.  Define  an $I$-fibration on  each of the rectangles $R_j$  by filling $R_{j}$  with arcs  parallel to the two arcs from 
$\partial R_j \cap \partial P$.

\vskip5pt

Let $\mathcal{E}$ be a {complete decomposing system} on  $\Sigma$, and consider the  collection of complementary pair-of-pants  $P_{i} \subset \Sigma$, i.e., $\Sigma = \bigcup_{ i = 1}^{2g - 2} P_i$. 

Put the structure as above on each of pair-of-pants $P_i$,  in such a way that on  each curve $E_k,$  one has placed four distinct vertices:  two vertices  from each of the two pair-of-pants adjacent to 
$E_{k}$.

The $I$-fibrations on any of the rectangles in each of the $P_i$ join  up to define a train track structure 
$\tau$ on $\Sigma  \smallsetminus \int(\Delta)$, where $\Delta$ is the  union of all the  triangles 
$\Delta_j$ in any of the $P_{i}$. For this train track the set of  exceptional  fibers ${\mathcal E}_{\tau}$ 
is exactly the complete decomposing system $\mathcal E$ we started out  with, and they are all cyclic, 
i.e., $\tau$ is a fat train track. Furthermore, the train track   $\tau$ has no complementary region other than  the two triangles in  each of the pair-of-pants $P_{i}$, and those are indeed triangles in 
the meaning of Section \ref{crvcmplx}. Finally, consider any arc $\alpha$ that intersects $\mathcal E$ precisely in its  endpoints: we observe that $\alpha$ can be carried by  $\tau$ if and only if it is a seam: In particular, no wave (with  respect to ${\mathcal E}_{\tau}$) is carried by the train track  $\tau$.
 
This construction gives rise to the following:

\vskip8pt

\begin{definition} \label{completefatraintrack}\rm 
A fat train track $\tau \subset  \Sigma$ is called {\it complete}  if  the following conditions are satisfied:
\begin{enumerate}

\item The collection $\mathcal{E}_{\tau}$ of exceptional fibers of  $\tau$ is a complete decomposing system on  $\Sigma$.

\item Each pair-of-pants $P_{i}$ complementary to the system  $\mathcal{E}_{\tau}$ contains two triangles as complementary  components of $\tau$ in $P_{i}$. 

\item 
The train track $\tau$ only carries seams, but no waves, with respect  to the complete decomposing system $\mathcal{E}_{\tau}$.

\end{enumerate}

\end{definition}

\vskip8pt

\begin{remark}\label{finitelymany} \rm 
Given a complete decomposing system $\mathcal{E} \subset \Sigma$,   there are only  finitely many complete fat train tracks $\tau$ with    $\mathcal{E}_{\tau} = \mathcal{E}$, up to   orientation preserving
homeomorphisms of  $\Sigma$ which leave invariant every curve of $\mathcal{E}$ and every  complementary pair-of-pants.

To be precise, for each curve $E \in \mathcal{E}$ there are six  possible configurations  for the vertices of the adjacent four  triangles, up to a  homeomorphism as above.  Hence we have  $6^{3g - 3}$ possible configurations for a complete  fat train track, up to such  homeomorphisms.

By definition every complete fat train tack is maximal.

\end{remark}

\vskip8pt

\begin{lemma} \label{fatcarries}
Let  $\mathcal{E} \subset \Sigma$ be a  complete decomposing system.  Any essential simple closed curve $D \subset \Sigma$ which does not have  a wave with respect to   $\mathcal{E}$, and is not parallel to  any $E_{k} \in \mathcal{E}$, is carried by some complete fat train  track $\tau$ with exceptional fibers  ${\mathcal E}_{\tau} = {\mathcal E}$.

The same is true for any system of pairwise disjoint essential simple  closed curves which satisfy the same conditions as $D$.

\end{lemma}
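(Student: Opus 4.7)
The plan is to take the given complete decomposing system $\mathcal{E}$ and specialize the construction preceding Definition \ref{completefatraintrack}, choosing the vertices on each curve $E_k \in \mathcal{E}$ so that in every pair-of-pants $P_i$ all arcs of $D \cap P_i$ are forced to lie inside the three rectangles of $\tau$, avoiding the two triangles. The lemma then follows because the resulting $\tau$ is by construction a complete fat train track with $\mathcal{E}_{\tau} = \mathcal{E}$ that carries $D$.

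First I would arrange $D$ tight with respect to $\mathcal{E}$. Since $D$ is essential and not parallel to any $E_k$, it cannot be disjoint from $\mathcal{E}$; otherwise $D$ would lie in one pair-of-pants and be boundary-parallel there. Tightness rules out $\partial$-parallel arcs in each $P_i$, and the ``no wave'' hypothesis rules out the remaining non-seam arcs, so every component of $D \cap P_i$ is a seam joining two distinct boundary components of $P_i$. Next, for each $P_i$ with boundary curves $E_j, E_k, E_l$ I would sort the seams of $D \cap P_i$ into the three types $jk, jl, kl$ and make the key combinatorial observation: since the seams are simple and pairwise disjoint in the pair-of-pants, on any $E_k$ the endpoints of the type-$jk$ seams and of the type-$lk$ seams coming from the $P_i$ side cannot interlace, so each type occupies a single contiguous cyclic block on $E_k$, and the two blocks are separated by two ``gap'' arcs of $E_k \smallsetminus D$.

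With the block structure in hand I would place the two $P_i$-vertices on $E_k$ inside these two gaps, and the two $P_{i'}$-vertices (coming from the other pair-of-pants adjacent to $E_k$) in the analogous gaps for $P_{i'}$, all four chosen distinct and in $E_k \smallsetminus D$. Inside each $P_i$ I would then draw the two disjoint triangles $\Delta_1, \Delta_2$ in the complement of $D \cap P_i$; this is possible because the seams partition $P_i$ into a union of parallel strips plus two ``outer'' regions, and the two triangles can be routed so that each triangle side lies outside the parallel family of the corresponding type. The three rectangles $R_{jk}, R_{jl}, R_{kl}$ complementary to $\Delta_1 \cup \Delta_2$ inherit the $I$-fibration parallel to $\partial P_i$, and by the placement of vertices each type-$jk$ seam has its two endpoints in the $\partial P_i$-arcs of $R_{jk}$ and can be isotoped rel endpoints into $R_{jk}$ transverse to the fibers. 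Hence $D$ is carried by the resulting $\tau$, which is a complete fat train track: $\mathcal{E}_{\tau} = \mathcal{E}$ by construction, each $P_i$ has two triangular complementary components, and any arc carried by $\tau$ in a single $P_i$ must cross a rectangle between two distinct boundary components, so it is a seam, ruling out waves.

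The main obstacle I anticipate is the geometric realization of the triangle sides disjoint from $D$ when some seam types are empty in a given $P_i$: in those degenerate cases the two ``gaps'' on $E_k$ fuse or extend, and one must place the otherwise unconstrained vertices carefully to keep the four vertices on each $E_k$ distinct and the two triangles embedded and disjoint from the seams. This is essentially a routine transversality argument. The case of a pairwise disjoint system of essential simple closed curves $D_1, \ldots, D_r$ satisfying the same hypotheses is handled by the identical construction applied to their union, which in each $P_i$ is simply a larger collection of pairwise disjoint seams obeying the same block structure on the $E_k$'s.
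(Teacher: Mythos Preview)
Your proposal is correct and follows essentially the same approach as the paper: both arguments reduce to the observation that, once $D$ is tight, every arc of $D\cap P_i$ is a seam, and seams of the same type in a pair-of-pants are necessarily parallel, so they can be collected into three $I$-fibered rectangles whose complement in each $P_i$ consists of two triangles. The only cosmetic difference is that the paper builds the rectangles first and obtains the triangles as their complement, whereas you place vertices and triangles first and obtain the rectangles as complement; your version is more explicit about the vertex placement and the degenerate case of empty seam types, which the paper handles in a single phrase (``after introducing, if necessary, some additional empty $I$-fibered rectangles'').
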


\vskip4pt

\begin{proof}
 If $D$ is not parallel to any $E_{k} \in \mathcal{E}$    and has no waves with respect  to $\mathcal{E}$,  then for each of the pair-of-pants $P_{i}$   complementary to  $\mathcal{E}$ in  $\Sigma$ the connected components of $D \cap P$  must all be seams.   Since any two seems which  join the same two boundary components of  $P_{i}$ are necessarily  parallel (because of the simple topology of  a pair-of-pants), it  follows that the parallelity classes of such seems can be grouped  together and  isotoped so that they are contained in a transversely  $I$-fibered rectangle in $P_{i}$  as  introduced above. After introducing, if necessary, some  additional  empty $I$-fibered rectangles,  the  complement of the rectangles in each $P_{i}$ will consist  precisely of two triangles. Thus the  union  of all   $I$-fibered rectangles defines a complete fat train  track  that carries $D$ and satisfies 
${\mathcal  E}_{\tau} = {\mathcal E}$.

\end{proof}

\vskip0pt

The same type of argument can be used to obtain the following statement:

\vskip0pt

\begin{remark} \label{fillingpants} \rm
Let $\mathcal E$ be a complete decomposing system on the surface  $\Sigma$, and let $D$ be an essential simple closed curve (or a system of  such curves) on $\Sigma$ that is tight with respect to 
$\mathcal E$. We say that $D$ {\em fills}  a pair-of-pants $P$ complementary to $\mathcal E$, if 
$D \cap P$  is the disjoint union of  precisely  3  distinct isotopy classes of intersection arcs.
As before we consider here isotopy of the pair $(P, \partial P)$. Then the following three statements are equivalent:

\begin{enumerate}
\item[(1)]
The curve $D$ fills every pair-of-pants complementary to $\mathcal E$, and none of the intersection arcs is a wave.

\item[(2)]
There exists a unique complete fat train track $\tau$ with exceptional fibers 
$\mathcal{E}_{\tau} = \mathcal{E}$ that carries $D$.

\item[(3)]
There exists some complete fat train track $\tau$ with exceptional fibers 
$\mathcal{E}_{\tau} = \mathcal{E}$ that is covered  by $D$.
\end{enumerate}

\end{remark}

\vskip0pt

\begin{lemma}\label{distancethree}  Let $\tau$ be a complete fat train track, and let $\tau'$ be derived  
from $\tau$. Let $D$ be an essential simple closed curve that is carried by $\tau'$,  and let $E$ be an  
essential simple closed curve that has  a wave with respect to $\tau$.   Then 
$$d_{\mathcal{C}}(D, E) \geq 2  $$

\end{lemma}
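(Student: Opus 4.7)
The approach is proof by contradiction: assume $d_{\mathcal{C}}(D,E) \leq 1$, so that either $D = E$ or $D$ and $E$ admit disjoint representatives on $\Sigma$. The key preparatory step is to pin down the geometric position of $D$ with respect to $\mathcal{E}_{\tau}$. Since $D$ is carried by $\tau'$ and $\tau'$ is derived from the (maximal) complete fat train track $\tau$, Lemma \ref{carriesimpliesunzip} yields that $D$ covers $\tau$. Applying the implication $(3) \Rightarrow (1)$ of Remark \ref{fillingpants} with $\mathcal{E} = \mathcal{E}_{\tau}$, after isotoping $D$ to be tight, $D$ fills every complementary pair-of-pants $P_i$: the intersection $D \cap P_i$ consists of exactly three distinct isotopy classes of arcs, one for each pair of boundary components of $P_i$, and none of these arcs is a wave.

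Two consequences are immediate. First, $D$ has no wave with respect to $\mathcal{E}_{\tau}$. Second, since in each $P_i$ there are arcs of $D$ with endpoints on every component of $\partial P_i$, we have $i(D, E_k) > 0$ for every $E_k \in \mathcal{E}_{\tau}$, so $D$ is not isotopic to any $E_k$. By Definition \ref{smallwave}(d), $D$ itself does not have a wave with respect to $\tau$. This already rules out the sub-case $D = E$, since by hypothesis $E$ does have a wave with respect to $\tau$.

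It remains to rule out disjoint representatives. If $E$ is isotopic to some $E_k \in \mathcal{E}_{\tau}$, we take the representative $E = E_k$, and then the inequality $i(D, E_k) > 0$ above contradicts disjointness. Otherwise $E$ contains a wave $w$ in some complementary pair-of-pants $P_i$, with both endpoints on a single boundary component $E_k \subset \partial P_i$. In a pair-of-pants, a non-$\partial$-parallel arc based on one boundary component separates the other two boundary components from each other. Since $D$ fills $P_i$, it contains an arc $\alpha$ joining those two separated components, so $\alpha$ must cross $w$ inside $P_i$, contradicting $D \cap E = \emptyset$.

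The substantive step of the argument is the first one: combining Lemma \ref{carriesimpliesunzip} with Remark \ref{fillingpants} to extract that $D$ fills every complementary pair-of-pants with three distinct classes of seam-type arcs, and hence meets each $E_k$ essentially. Once this geometric picture of $D$ relative to $\mathcal{E}_{\tau}$ is in hand, excluding $d_{\mathcal{C}}(D,E) \leq 1$ reduces to a short planar separation observation inside a single pair-of-pants.
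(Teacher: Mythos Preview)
Your proof is correct and follows essentially the same approach as the paper: both begin by applying Lemma~\ref{carriesimpliesunzip} to deduce that $D$ covers $\tau$, and then argue that a curve covering $\tau$ must intersect every $E_k \in \mathcal{E}_\tau$ and every wave in every complementary pair-of-pants. The paper states this last implication in one sentence without routing through Remark~\ref{fillingpants}, whereas you unpack it via that remark and a short planar separation argument in $P_i$; the underlying geometry is identical.
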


\vskip0pt

\begin{proof} 
We apply Lemma \ref{carriesimpliesunzip} to deduce that the curve  $D$  covers $\tau$. But any curve 
that covers $\tau$ must intersect any of the $E_{j} \in {\mathcal   E}_{\tau}$, and also any wave in any of 
the complementary components $P_{i}$ of  the complete decomposing   system ${\mathcal E}_{\tau}$ in 
$\Sigma$. Thus $D$ intersects the curve $E$ and hence it is of   distance at least two from it:
$d_{\mathcal{C}}(D, E) \geq 2$ 

\end{proof}

\vskip8pt

\begin{corollary}  \label{corlargedistance}
Let $\tau_{0} \supset \tau_{1} \supset \ldots  \supset\tau_{n}$, $n  \geq 1$,  be an $n$-tower  of derived train tracks in  $\Sigma$.  Assume that $\tau_0$ is a complete fat  train track.  Let $D$ be an  essential simple closed curve  carried by $\tau_n$, and let $E$ be an  essential simple closed   curve which has a wave with respect to  $\tau_0$. Then one has:
$$d_{\mathcal{C}}(D, E)\geq n+1$$

\end{corollary}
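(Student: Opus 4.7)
The plan is to reduce the statement to Proposition \ref{towerimpliesdistance} (or, rather, to the equivalent formulation given in the Remark that immediately follows it). Since $\tau_{0}$ is a complete fat train track, it is maximal by Remark \ref{finitelymany}, so Proposition \ref{towerimpliesdistance} applies to the given tower $\tau_{0} \supset \tau_{1} \supset \ldots \supset \tau_{n}$ and the curve $D$ carried by $\tau_{n}$. It gives us the implication
$$d_{\mathcal{C}}(D, E) \leq n \;\Longrightarrow\; E \text{ can be carried by } \tau_{0}\, .$$
Thus it suffices to prove the following claim: if $E$ has a wave with respect to the complete fat train track $\tau_{0}$, then $E$ cannot be carried by $\tau_{0}$. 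This directly yields the contrapositive $d_{\mathcal{C}}(D,E) \geq n+1$.

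To establish the claim I would split along the two alternatives of Definition \ref{smallwave}(d). In the second alternative $E$ is isotopic to some exceptional fiber $E_{k} \in \mathcal{E}_{\tau_{0}}$; but $E_{k}$ is a cyclic exceptional fiber, i.e., a union of $I$-fibers of $\tau_{0}$, and is therefore nowhere transverse to the $I$-fibration. Hence neither $E_{k}$ nor any isotopic copy can be carried by $\tau_{0}$. In the first alternative, $E$ is tight with respect to $\mathcal{E}_{\tau_{0}}$ and contains an honest wave subarc $\omega$ sitting inside some complementary pair-of-pants $P_{i}$. Suppose, for contradiction, that $E$ is carried by $\tau_{0}$. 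After isotoping $E$ into its carried position, its intersection with each $P_{i}$ consists of arcs carried by the portion of $\tau_{0}$ inside $P_{i}$, and property (3) of Definition \ref{completefatraintrack} asserts that such arcs must all be seams. This contradicts the presence of the wave $\omega$.

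The main obstacle here is the routine but genuine subtlety in the first alternative: I must be sure that ``containing a wave'' is really invariant under isotopy of $E$ on $\Sigma$, so that one cannot destroy $\omega$ by moving $E$ into carried position. This is handled once one notes that an isotopy of $E$ may be realized as an ambient isotopy of the pair $(\Sigma, \mathcal{E}_{\tau_{0}})$ after first restoring tightness, and the combinatorial pattern of intersection arcs with each pair-of-pants---in particular, which of them are seams, which are waves, and which are boundary-parallel---is determined by the pair of boundary components carrying the endpoints, hence is preserved by any such isotopy (a fact implicit in Remark \ref{fillingpants}). With this observation in place, the two alternatives together complete the proof.
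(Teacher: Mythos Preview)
Your proof is correct and reaches the conclusion by a somewhat different route than the paper. The paper applies Proposition~\ref{towerimpliesdistance} to the truncated $(n-1)$-tower $\tau_{1}\supset\ldots\supset\tau_{n}$, so that every curve $D'$ with $d_{\mathcal C}(D,D')\le n-1$ is carried by $\tau_{1}$, and then invokes Lemma~\ref{distancethree} (which in turn rests on the covering argument of Lemma~\ref{carriesimpliesunzip}) to get $d_{\mathcal C}(D',E)\ge 2$. You instead run Proposition~\ref{towerimpliesdistance} on the full $n$-tower and show directly, from property~(3) of Definition~\ref{completefatraintrack}, that a curve with a wave with respect to $\tau_{0}$ cannot be carried by $\tau_{0}$. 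Your route is slightly more elementary in that it never needs the ``covers'' notion or Lemma~\ref{distancethree}; the paper's route has the advantage of packaging the endgame into a lemma that is reused elsewhere.

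One small gap worth tightening: in your first alternative, the sentence ``$E_{k}$ is a union of $I$-fibers \ldots\ hence neither $E_{k}$ nor any isotopic copy can be carried'' only establishes that $E_{k}$ in its given position is not transverse; it does not by itself rule out an isotopic carried copy. The clean argument is the one parallel to your second alternative: any essential curve carried by $\tau_{0}$ must meet an exceptional fiber (Remark~\ref{simplyconnected}), and in carried position all its intersection arcs with the pairs-of-pants are seams, so it is tight with strictly positive geometric intersection number with $\mathcal{E}_{\tau_{0}}$; but $E_{k}$ has geometric intersection number zero with $\mathcal{E}_{\tau_{0}}$. With that adjustment your argument is complete.
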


\vskip8pt

\begin{proof} 
By Proposition \ref{towerimpliesdistance} any  curve $D'$  of distance  at most $n -1$ from $D$ is carried by $\tau_{1}$. Hence we can apply Lemma \ref{distancethree} to deduce that $D'$ has distance 
greater or equal to $2$ from $E$.  Thus  $d_{\mathcal{C}}(E, D) \geq  n + 1$. 

\end{proof}

\vskip8pt

We now prove a useful analogue of Lemma \ref{disjointcarried}, for  simple arcs rather than simple closed curves.   In order to ``fix'' the endpoints  of such an arc $\alpha$ we require here and in the next 
sections  that $\partial \alpha$ is contained in a complete decomposing system $\mathcal E$, and that 
$\alpha$ is  tight with respect to $\mathcal E$. The system $\mathcal E$, in our  context, is given as set of  singular fibers $\mathcal{E} = \mathcal{E}_{\tau}$ of some complete  fat train track $\tau$. Thus the role of this train track $\tau$ is in some  sense  that of a ``coordinate system'', while the train track 
$\tau'$,  the  one we are really  interested in, is a much finer and longer train track than $\tau$:  we only require that $\tau'$ is derived from $\tau$.

\begin{lemma}\label{disjointcarried2}  
Let $\tau$ be a  complete fat train track on  a  surface $\Sigma$, and let $\tau'$ be  a maximal train 
track  derived from $\tau$.  Let   $\beta$ be an arc with endpoints on $\mathcal{E}_{\tau}$ which covers $\tau'$. Let  $D$ be an  essential simple closed curve which is tight with respect to  $\mathcal{E}_{\tau}$ and contains $\beta$ as subarc. Then  $D$ can be  carried by $\tau'$, and in fact covers $\tau'$.

\end{lemma}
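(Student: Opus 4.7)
The plan is to closely mimic the proof of Lemma \ref{disjointcarried}, with the arc $\beta$ playing the role that the disjoint curve system $\mathcal{D}$ plays there. Since $\beta \subset D$ already covers $\tau'$, it suffices to show that $D$ can be isotoped to lie in $\tau'$ as a carried curve; the covering conclusion is then immediate. The preliminary step I would carry out is an analog of Lemma \ref{entireside} for the single arc $\beta$: for every complementary component $\Delta$ of $\tau'$ and every side $\delta$ of $\Delta$, there is a subarc $\nu \subset \beta$ running parallel along all of $\delta$. The proof is verbatim that of Lemma \ref{entireside}, with $\beta$ replacing $\mathcal D$: pick an $I$-fiber $\sigma$ of $\tau'$ meeting $\delta$ at an interior point $P'$, take $Q \in \sigma \cap \beta$ closest to $P'$ (which exists because $\beta$ covers $\tau'$), and follow $\beta$ from $Q$ in the direction parallel to $\delta$; any premature branching from $\delta$ would force another subarc of $\beta$ to intersect $\sigma$ strictly closer to $P'$ than $Q$, contradicting the choice of $Q$.

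With this in hand, I would isotope $D$ off the complementary components of $\tau'$ in the standard way. Since $\beta$ is already carried by $\tau'$, the isotopy can be arranged to fix $\beta$ pointwise and modify only the complementary subarc $\gamma = D \smallsetminus \mathrm{int}(\beta)$, keeping $D$ embedded throughout. After this, only finitely many singularities of $D$ remain with respect to the $I$-fibration, each at a cusp of some complementary triangle of $\tau'$. Fixing such a singularity $X$ at a cusp $P$ of a triangle $\Delta$, two subarcs $\alpha, \alpha' \subset D$ emanate from $X$ parallel to initial segments of the two sides $\delta, \delta'$ of $\Delta$ adjacent to $P$; exactly as in Lemma \ref{disjointcarried}, I choose $X$ so that $\alpha$ and $\alpha'$ are the subarcs of $D$ closest to $\delta$ and $\delta'$ respectively.

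The main obstacle, and the only substantive departure from Lemma \ref{disjointcarried}, is that $\beta$ is not disjoint from $D$ but a subarc of it, so the ``no intrusion'' step must be argued differently. By the closest-to-$\delta$ choice of $\alpha$, no subarc of $D$---hence no subarc of $\beta$---can lie between $\alpha$ and $\delta$ in a neighborhood of $P$. The parallel subarc $\nu \subset \beta$ furnished in the first step therefore lies strictly on the far side of $\alpha$ from $\delta$, trapping $\alpha$ inside the strip bounded by $\delta$ and $\nu$. Simplicity of $D$ forbids $\alpha \subset D$ from crossing $\nu \subset D$, so $\alpha$ is forced to continue parallel to $\delta$ all the way to the opposite cusp of $\delta$, yielding the elongation $\hat\alpha$. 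The symmetric argument gives $\hat\alpha'$ along $\delta'$. One then isotopes $\hat\alpha \cup \hat\alpha'$ across the triangle $\Delta$ (indeed a triangle, by maximality of $\tau'$) onto an arc parallel to the third side $\delta''$, eliminating the singularity $X$. Iterating over all singularities isotopes $D$ entirely into $\tau'$, and then $\beta \subset D$ already covering $\tau'$ forces $D$ itself to cover $\tau'$, completing the proof.
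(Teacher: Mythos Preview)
Your proposal is correct and follows essentially the same approach as the paper's own proof. Both arguments first establish the analogue of Lemma~\ref{entireside} for the single covering arc $\beta$ (verbatim proof), then reproduce the singularity-elimination procedure of Lemma~\ref{disjointcarried} while fixing $\beta$, and finally conclude that $D$ covers $\tau'$ because it contains $\beta$. The paper's proof simply says ``proceed precisely as in the proof of Lemma~\ref{disjointcarried}, using the analogue of Lemma~\ref{entireside}'' without spelling out the adaptation; you go further and explicitly identify the one nontrivial change---that $\beta$ is a subarc of $D$ rather than disjoint from it---and correctly replace the disjointness argument by simplicity of $D$ (so $\alpha$ cannot cross $\nu \subset \beta \subset D$). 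One small remark: the paper additionally fixes the intersection points of $D$ with $\mathcal{E}_\tau$ and keeps $D$ tight during the isotopy, but your choice to fix $\beta$ pointwise accomplishes the same purpose, namely guaranteeing that $\beta$ remains a subarc of $D$ after the isotopy.
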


\begin{proof}
We first observe that the analogue  of the statement of Lemma \ref{entireside} holds, with $D$ replaced 
by $\beta$: the proof given in Section 2 applies word by word to this generalization.

We now proceed precisely as in the proof of Lemma  \ref{disjointcarried}: We first use an isotopy that fixes $\beta$ and  all intersection points of $D$ with  $\mathcal{E}_{\tau}$ to isotope $D$ off 
all complementary components of $\tau'$,  while making sure that it  stays tight with respect to 
$\mathcal{E}_{\tau}$,  and also that it stays  simple.  We then decrease the number of singularities successively,  as in the  proof of Lemma  \ref{disjointcarried} (using the above mentioned analogue  of Lemma  \ref{entireside}). At the end state of this procedure there are no singularities left, and $D$ has been isotoped   into a position where it is carried by $\tau'$. But $D$ contains $\beta$ as subarc, which covers $\tau'$ by  assumption. Thus $D$ covers $\tau'$.

\end{proof}

\vskip5pt

Let $k$ be an essential simple closed curve which is tight with  respect to the complete decomposing system $\mathcal E$ on $\Sigma$. The number  $|k \cap \mathcal{E}|$ of  intersection  points of  $k$ with $\mathcal E$ is called the  {\it $\mathcal E$-length}  of $k$ and is denoted by  
$\mid k \mid_{\mathcal E}$.  The same definition and notation will be  used for a simple arc $\alpha$ instead of $k$, where as before we require $\partial \alpha \in \mathcal E$.

Recall that two tight simple arcs  $\alpha, \alpha'$ on $\Sigma$ are called {parallel} (with respect to 
$\mathcal E$) if, after orienting them properly, they intersect $\mathcal E$ in precisely the same sequence of curves  $E_{i_1},\dots, E_{i_k}$, and if the intersections occur from the same direction.  This is  equivalent to saying that the arcs are isotopic by an isotopy of the  pair $(\Sigma, \mathcal{E})$.

\vskip5pt

Let $c$ and $k$ be distinct simple closed curves on $\Sigma$ that   are tight with respect to 
$\mathcal  E$, and let $P \in c \cap k$ be an intersection point.  We denote by 
$\mid P \mid_{\mathcal E}$ the $\mathcal E$-length of  any of two maximal  arcs on $k$ and  on $c$, 
which are  parallel and  which both contain $P$.  In the last sentence, the terminology ``arc on a closed curve'' needs to be  specified: Such an arc is not necessarily a subarc, but it can also be  an arc that winds several times around the closed curve, thus being  immersed but not embedded in the curve.

We define the {\it   twisting number of $k$ along $c$ at $P$} to be the quotient:  
$$tw_{P}(k,c) = \frac{\mid P \mid_{\mathcal E}}{\mid c  \mid_{\mathcal  E}}$$

\vskip5pt

\begin{lemma}
\label{twistingnumber}

Let $\mathcal{E}$ be a complete decomposing system on $\Sigma$, and  let $k$ and $c$ be essential simple curves on $\Sigma$ that are tight  with respect to $\mathcal E$.  Assume that $k$ and $c$ only intersect  essentially, and let $P \in c \cap k$ be such an essential  intersection point.

\vskip4pt

\begin{enumerate}
\item[(a)] The Dehn twist $\delta_{c}$ at $c$ effects the 
twisting number as follows:
$$ tw_{P}(\delta^m_{c}(k), c) = |tw_{P}(k, c)  + \varepsilon m |\, ,$$
where $\varepsilon = \pm 1$ is independent of $m$.

 \vskip4pt

 \item [(b)] Assume  $\mid tw_{P}(k, c) \mid \, \, > 1$ and assume   furthermore that $c$ covers 
some train track $\tau'$ that is derived from a complete fat train track  $\tau$ with  
$\mathcal{E}_{\tau} =  \mathcal{E}$.  Then $k$ covers $\tau'$ as well. In particular, $k$ does not have waves  with respect to $\mathcal E$.
\end{enumerate}

\end{lemma}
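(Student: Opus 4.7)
For part (a), the plan is a direct computation. The Dehn twist $\delta_c^m$ is supported in an annular neighborhood of $c$, and at each essential transverse intersection of $k$ with $c$ it inserts a strand that wraps $m$ times around $c$, in a direction determined by the local orientations at that intersection. Consequently the maximal arc of $\delta_c^m(k)$ through $P$ that is $\mathcal{E}$-parallel to an arc on $c$ has $\mathcal{E}$-length equal to $|P|_{\mathcal{E}} + \varepsilon m |c|_{\mathcal{E}}$, where the sign $\varepsilon = \pm 1$ records whether the twist and the local branch of $k$ wind around $c$ in the same or opposite sense near $P$ and hence does not depend on $m$. Dividing by $|c|_{\mathcal{E}}$ and taking absolute values (since the twisting number is non-negative by definition) yields the claimed formula.

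For part (b), I will produce a subarc $\beta$ of $k$ that covers $\tau'$, and then invoke Lemma \ref{disjointcarried2} with $D = k$ to conclude that $k$ itself covers $\tau'$. The hypothesis $|tw_P(k,c)| > 1$ is equivalent to $|P|_{\mathcal{E}} > |c|_{\mathcal{E}}$, so the maximal arc $\gamma$ on $c$ parallel to its partner $\beta$ on $k$ through $P$ has $\mathcal{E}$-length strictly greater than that of $c$. Viewed as an immersed arc on the closed curve $c$, the arc $\gamma$ therefore wraps more than once around $c$, so its image in $\Sigma$ coincides with $c$ as a set. Since $c$ covers $\tau'$ by assumption, every $I$-fiber of $\tau'$ is met by $\gamma$.

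The crux of the proof is to promote this property from $\gamma$ to $\beta$. Because $\beta$ and $\gamma$ are $\mathcal{E}$-parallel, in each complementary pair-of-pants $P_j$ of $\mathcal{E}$ their corresponding subarcs are seams joining the same pair of boundary components of $P_j$, and are therefore isotopic as proper arcs in $(P_j, \partial P_j)$. The intersection $\tau' \cap P_j$ is a sub-train-track of $P_j$ whose $I$-fibers are transverse to $\mathcal{E}$ along $\partial P_j$. Since $\gamma \cap P_j$ is carried by $\tau' \cap P_j$, so is the isotopic proper arc $\beta \cap P_j$, and by the combinatorial rigidity of a seam in a pair-of-pants it traverses exactly the same $I$-fibers of $\tau'$ inside $P_j$. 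Gluing across all $P_j$, the arc $\beta$ meets every $I$-fiber that $\gamma$ does, so $\beta$ covers $\tau'$.

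With this $\beta$ in hand, Lemma \ref{disjointcarried2} immediately yields that $k$ covers $\tau'$. Since $\tau$ is maximal (by Remark \ref{finitelymany}) and $\tau'$ is derived from it, Lemma \ref{carriesimpliesunzip} shows that $k$ covers, and hence is carried by, $\tau$. By condition (3) of Definition \ref{completefatraintrack}, $\tau$ carries only seams with respect to $\mathcal{E}_{\tau} = \mathcal{E}$; so every component of $k \cap P_j$ is a seam and $k$ has no waves with respect to $\mathcal{E}$. The step I expect to be most delicate is the $\mathcal{E}$-parallel invariance of the property of covering $\tau'$, i.e.\ the passage from $\gamma$ covers $\tau'$ to $\beta$ covers $\tau'$; everything else is an assembly of already-established lemmas.
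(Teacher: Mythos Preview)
Your approach is the same as the paper's: part (a) is argued directly from the definition of the twisting number and the geometry of a Dehn twist, and part (b) is precisely an application of Lemma~\ref{disjointcarried2} to the maximal $\mathcal E$-parallel subarc $\beta \subset k$ --- the paper's proof of (b) is in fact just a single sentence pointing to that lemma, without spelling out why $\beta$ covers $\tau'$.

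One small correction to your write-up of the delicate step: the $I$-fibers of $\tau'$ (being subarcs of those of the fat train track $\tau$) run \emph{parallel} to $\mathcal E$ near $\partial P_j$, not transverse; and two isotopic seams in $P_j$ need not literally hit the same $I$-fibers of the derived track $\tau'$, so ``combinatorial rigidity'' alone does not quite close the gap. The clean way to finish is to note that an embedded arc $\mathcal E$-parallel to $\gamma$ is $\mathcal E$-isotopic to a spiral in a thin annular neighborhood of $c$ inside $\tau'$; after this ambient $(\Sigma,\mathcal E)$-isotopy of $k$, the subarc $\beta$ visibly meets every $I$-fiber that $c$ does, and Lemma~\ref{disjointcarried2} applies verbatim.
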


\begin{proof}

Statement (a) of this lemma is a direct consequence of the above  definitions.   Here the role of the constant $\varepsilon$ is geometrically  explained as follows:  The Dehn twist $\delta_{k}$ is a 
``right-handed'' twist. On the other hand, before applying the Dehn  twist, the curve $k$ may already, at the intersection point $P$, wind around $c$, either in the ``right  hand'' sense, or in the ``left hand'' one. In the first case one has  to set $\varepsilon = 1$, while in the second case one has $\varepsilon = -1$.

To prove (b), we  first note that $k$ is not assumed  to be carried by $\tau'$. However, it suffices to apply Lemma  \ref{disjointcarried2}, where $\beta$ is the maximal subarc on $k$ which  contains $P$ and is parallel to an arc on $c$ that also contains $P$.

\end{proof}

\vskip8pt

\begin{remark} \label{laminationversuscurve} \rm
(a)  Since in Lemma   \ref{twistingnumber} $k$ and $c$ are both simple, it follows directly 
from the  definitions that  for any two intersections points $P, Q \in c \cap k$  the twisting numbers 
$tw_{P}(k, c)$ and $tw_{Q}(k, c)$ cannot differ  by more than 1.  The same is true for $tw_{Q}(k', c)$ rather than  $tw_{Q}(k, c)$, where $k'$ is any second essential simple closed curve  disjoint from $k$ that is tight with respect to $\mathcal E$ and  intersects $c$ essentially in $Q$.

\smallskip
\noindent (b) 
The above definitions as well as the statement of Lemma   \ref{twistingnumber} stay valid if the curve 
$k$ is replaced by a   lamination $\mathcal L \subset \Sigma$,  where every leaf of $\mathcal  L$ is supposed to be tight with respect to $\mathcal E$.

\end{remark}

\vskip4pt

\begin{proposition} \label{surgerylemma}
Let $\mathcal{D}$ and $\mathcal{E}$ be  complete decomposing systems on $\Sigma$, and
let $k$ be an essential simple curve on $\Sigma$ that is tight  with respect to both, $\mathcal{D}$ and 
$\mathcal{E}$.  Assume that  the curve $k$ fills every pair-of-pants complementary to $\mathcal D$ 
or to $\mathcal E$, and that, furthermore, $k$ contains  no wave with respect to either  $\mathcal D$ or 
$\mathcal E$. 

Then  there exists an integer $m_{0}$ such that for  every  
$m \in \mathbb{Z} \smallsetminus \{m_{0}, m_{0}+1, m_{0} +   2, m_{0}+3\}$  one has:

\begin{enumerate}
\item[(a)]

The complete decomposing system  $\mathcal{D}^m = \delta_{k}^m(\mathcal{D})$, obtained from 
$\mathcal D$  via $m$-fold  Dehn twist on $k$, has the property  that $\mathcal{D}^m$ and 
$\mathcal{E}$ do not have  waves with respect to each other. 

\item[(b)]
If $k$ covers a maximal train track $\tau'$ that is derived from  some complete fat train track  $\tau$
with  exceptional fibers $\mathcal{E}_{\tau} = \mathcal E$, then $\mathcal{D}^m$  as well covers 
$\tau'$  (and hence is carried by $\tau'$).

\end{enumerate}
\end{proposition}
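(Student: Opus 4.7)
The plan relies on extracting two complete fat train tracks from the hypotheses. Since $k$ fills every pair-of-pants of $\mathcal{E}$ and contains no wave with respect to $\mathcal{E}$, Remark~\ref{fillingpants} produces a complete fat train track $\tau_{\mathcal{E}}$ with $\mathcal{E}_{\tau_{\mathcal{E}}} = \mathcal{E}$ that $k$ covers; symmetrically one obtains $\tau_{\mathcal{D}}$ with $\mathcal{E}_{\tau_{\mathcal{D}}} = \mathcal{D}$ covered by $k$. Because $k$ enters every complementary pair-of-pants of either system, $k$ must intersect every curve of $\mathcal{D}$ and every curve of $\mathcal{E}$ essentially, so all twisting numbers below are well-defined.

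Part (b) is now a direct application of Lemma~\ref{twistingnumber}. For each $D \in \mathcal{D}$ and each intersection point $P \in D \cap k$, Lemma~\ref{twistingnumber}(a) gives $tw_{P}(\delta_{k}^{m}(D), k) = |tw_{P}(D, k) + \varepsilon_{P} m|$ in the $\mathcal{E}$-length, so for $m$ outside a suitable interval this exceeds $1$; Lemma~\ref{twistingnumber}(b), applied with $c = k$ (which covers $\tau'$ by hypothesis), then yields that $\delta_{k}^{m}(D) = D^{m}$ covers $\tau'$. Specialising this to $\tau' = \tau_{\mathcal{E}}$ gives the first half of part (a): since any curve carried by a complete fat train track meets every complementary pair-of-pants only in seams, by condition (3) of Definition~\ref{completefatraintrack}, no curve of $\mathcal{D}^{m}$ carries a wave with respect to $\mathcal{E}$.

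The second half of (a) is obtained by conjugation: a wave of $E \in \mathcal{E}$ with respect to $\mathcal{D}^{m}$ corresponds, under the homeomorphism $\delta_{k}^{-m}$ which fixes $k$ and maps $\mathcal{D}^{m}$ to $\mathcal{D}$, to a wave of $\delta_{k}^{-m}(E)$ with respect to $\mathcal{D}$. The symmetric application of the preceding argument (roles of $\mathcal{D}$ and $\mathcal{E}$ interchanged, and $\tau_{\mathcal{D}}$ in place of $\tau_{\mathcal{E}}$) rules this out for $m$ outside another bounded window. The main delicacy, which I expect to be the hardest part, is the bookkeeping required to fit both bad windows inside a single block $\{m_{0}, m_{0}+1, m_{0}+2, m_{0}+3\}$ of four consecutive integers. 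For each fixed direction, Remark~\ref{laminationversuscurve}(a) guarantees that the relevant initial twisting numbers differ by at most $1$ across intersection points and across disjoint curves in the corresponding system, so the corresponding bad set is an interval of length at most $3$ and contains at most $4$ integers; the $\mathcal{D}$-$\mathcal{E}$ symmetry of the hypotheses with respect to the common curve $k$ then aligns the centers of the two intervals and pins the signs $\varepsilon_{P}$, $\varepsilon'_{Q}$ so that a single such block covers both windows simultaneously.
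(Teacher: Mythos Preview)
Your outline follows the paper's strategy: track twisting numbers via Lemma~\ref{twistingnumber}(a), apply Lemma~\ref{twistingnumber}(b) for covering, and use the $\mathcal{D}\leftrightarrow\mathcal{E}$ symmetry through $\delta_k^{-m}$. The genuine gap is precisely where you flag it. The $\mathcal{D}$-exceptional window is governed by the initial numbers $tw_P(D_i,k)$ measured in $\mathcal{E}$-length, while the $\mathcal{E}$-exceptional window is governed by $tw_Q(E_j,k)$ measured in $\mathcal{D}$-length; these live on different scales with no a priori relation, and invoking ``symmetry'' (which in any case swaps $m\leftrightarrow -m$) does not by itself force the two windows to overlap. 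The paper supplies the missing bridge: it writes the sum $S_m$ of all $\mathcal{D}$-side twisting numbers as an affine function of the geometric intersection number $|\mathcal{D}^m\cap\mathcal{E}|$, observes that $|\mathcal{D}^m\cap\mathcal{E}| = |\mathcal{D}\cap\mathcal{E}^m|$ (with $\mathcal{E}^m:=\delta_k^{-m}(\mathcal{E})$), and concludes that the $\mathcal{D}$-side and $\mathcal{E}$-side sums are affine reparametrisations of one and the \emph{same} piecewise-linear function of $m$. This forces their break-loci $N_{\mathcal D}$ and $N_{\mathcal E}$ to coincide, anchoring both exceptional sets to a common centre and bounding their union by four consecutive integers. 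Without this intersection-number argument you only get two independent windows and a bound of six or more exceptional values.

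A smaller point: specialising Lemma~\ref{twistingnumber}(b) to $\tau'=\tau_{\mathcal E}$ is not literally valid, since that lemma (through Lemma~\ref{disjointcarried2}) requires $\tau'$ to be \emph{derived} from a complete fat train track, whereas $\tau_{\mathcal E}$ \emph{is} that train track. The paper argues this step directly: once some $\delta_k^m(D_i)$ has twisting $>1$ along $k$ it is carried by $\tau_{\mathcal E}$, and the remaining disjoint curves of $\mathcal D^m$ then follow via Lemma~\ref{disjointcarried}.
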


\vskip5pt

\begin{proof}
(I.) In this first part of the proof we only consider the system  $\mathcal D$, and we want to investigate which values $m \in  \mathbb{Z}$ have the property (actually a slightly stronger one) that
the system $\mathcal{D}^m$ does not contain waves with respect  to $\mathcal E$.

\smallskip

We first consider an essential intersection point $P$ of $k$ with  some curve $D_{i}$ of the system 
$\mathcal D$. We apply statement (a)  of Lemma \ref{twistingnumber} to see that one of the following two cases occurs, according to whether $tw_{P}(D_{i}, k)$ is an integer (case (1)) or not  (case (2)): 

\begin{enumerate}
\item [(1)]
There exists an integer $m_{1} \in \integers$ such that 
$$tw_{P}(\delta^{m}_{k}(D_{i}), k) = |m - m_{1}|$$
for all $m \in \mathbb{Z}$.

\item [(2)]
There exists an integer $m_{1} \in \integers$  and real numbers $c_{-}, c_{+} \in \, \, (0, 1)$
such that:
\begin{itemize}
\item[(a)]
    $tw_{P}(\delta^{m_{1}}_{k}(D_{i}), k) = c_{-}$
\item[(b)]
$tw_{P}(\delta^{m_{1}+1}_{k}(D_{i}), k)  = c_{+}$

\item[(c)]
$tw_{P}(\delta^{m}_{k}(D_{i}), k) = |m - m_{1} -1 + c_{+}|$ if $m \geq m_{1} +1$

\item[(d)]
$tw_{P}(\delta^{m}_{k}(D_{i}), k) = |-m + m_{1} + c_{-}|$ if $m \leq  m_{1}$.

\end{itemize}
\end{enumerate}

\noindent
A second curve $D_{j} \in \mathcal D$ and any intersection point $Q \in  D_{j} \cap k$ satisfies the same statement, and furthermore we observed  in Remark \ref{laminationversuscurve} (a) that 
$tw_{P}(\delta^{m}_{k}(D_{i}), k)$ and $tw_{Q}(\delta^{m}_{k}(D_{j}), k)$ can  not differ by more than 1. 

\smallskip

We now want to define the  {\it exceptional $\mathcal D$-values} $m_{-}\, , \ldots,  m_{+}$ in 
$\mathbb{Z}$ by defining their complement in $\mathbb{Z}$:

An integer $m$ is a {\it non-exceptional} $\mathcal D$-value if at every intersection point $P$ of 
$\delta^{m}_{k}(D_{i})$ with $k$,  for any $D_{i} \in \mathcal D$, one has 
$$tw_{P}(\delta^{m}_{k}(D_{i}), k) \geq 1 \, ,$$
and for at least one such $P$ and $D_{i}$ the inequality is  strict.

By checking for each $D_{i} \in \mathcal D$ the two possible cases (1) and (2) above, 
we deduce that there are  at most three adjacent {\it exceptional $\mathcal D$-values} 
$m_{-}\, , \ldots,  m_{+}$ in  $\mathbb{Z}$. Furthermore, we deduce from the above definition of the 
non-exceptional values, that for every integer $m \geq m_{+} + 1$ the sum $S_{m}$ 
of all twisting numbers, over all intersection points of any 
$\delta^m_{k}(D_{i})  \in \delta^m_{k}(\mathcal D)$ with $k$, satisfies:
$$S_{m} = S_{m-1} + |\mathcal D \cap k| $$
Similarly, for any integer $m \leq m_{-} - 1$ we obtain:
$$S_{m} = S_{m+1} + |\mathcal D \cap k|$$
We finish this first part of the proof with the observation that the sum $S_{m}$, for any
$m \in \mathbb{Z}$, can also be expressed as quotient
$$S_{m} = \frac{|\mathcal{D}^m \cap \mathcal{E}|- c_{\mathcal E} }{|k|_{\mathcal E}}  \, ,$$
where $c_{\mathcal E}$ is a constant independent of $m$: It is 
equal to the number of intersection points of $\mathcal D$ with $\mathcal E$ that  are not 
contained in any of the arcs on some $D_{i} \in \mathcal D$  which are parallel to an arc on $k$, where the two arcs intersect  essentially, after possibly an isotopy of $D_{i}$ or $k$.

\medskip

Before starting with the second part of the proof, i.e. the  comparison between the exceptional 
$\mathcal D$-values and the  exceptional $\mathcal E$-values, let us observe the following 
properties of the exceptional values:

\begin{enumerate}
\item[(a)]
For every non-exceptional $\mathcal D$-value $m \in  \mathbb{Z}$ there  is at least one of the curves 
$D_{i} \in \mathcal D$ which  intersects $k$ essentially in some point $P$ and satisfies 
$tw_{P}(\delta^m_{k}(D_{i}, k)) > 1$.  Thus $D_{i}$ is carried by the  complete fat train track $\tau$ with exceptional fibers  $\mathcal{E}_{\tau} = \mathcal E$ that is defined by $k$ (compare 
Remark \ref{fillingpants}), and in particular $D_{i}$ has no wave with  respect to $\mathcal E$.  
Since $\tau$ is maximal and the other  $D_{j} \in \mathcal D$ are disjoint from $D_{i}$, the previous 
statement is true for all of $\mathcal D$.

\item[(b)]
We stated above that very integer $m \geq m_{+} +1$ satisfies $S_{m} = S_{m-1} + |\mathcal D \cap k|$.
However, it is possible that the value $m = m_{+}$ also satisfies this equality: This occurs if
at every intersection point $P$ of $\delta^{m_{+}}_{k}(D_{i})$ with $k$,  for any $D_{i} \in \mathcal D$, one has:  $$tw_{P}(\delta^{m_{+}}_{k}(D_{i}), k) = 1$$
The analogous observation is true for $m_{-}$ rather than $m_{+}$.

We conclude that there are two linear (or, rather, ``affine'')  functions $m \mapsto S_{+}(m)$ 
and $m \mapsto S_{-}(m)$ such that  $S_{m + \varepsilon} = S_{+}(m + \varepsilon)$ or 
$S_{m + \varepsilon} = S_{-}(m + \varepsilon)$ for all $\varepsilon \in \{-1, 0, 1\}$ and
$m \in  \mathbb{Z} - N_{\mathcal D}$, where $N_{\mathcal D}$ is a subset of the $\mathcal D$-exceptional values $m_{-}, \ldots,  m_{+}$, and $N_{\mathcal D}$ has either  cardinality 3, 2, or 1. Furthermore, if $N_{\mathcal D}$ has  cardinality 1 and there are precisely three exceptional $\mathcal 
D$-values, then it is the middle one of those that is contained in $N_{\mathcal D}$.

\end{enumerate}

\medskip

\noindent (II.) By considering $\mathcal{E}^m =  \delta_{k}^{-m}(\mathcal{E})$ and 
the pairs $\mathcal{D},  \mathcal{E}^m$ we observe that the hypothesis in the 
Proposition is  symmetric in  $\mathcal D$ and  $\mathcal E$, and that the same arguments as in part (I.) apply to $\mathcal  E$ rather than $\mathcal D$.  We now note that  
$|\mathcal{D}^m \cap \mathcal{E}| = |\mathcal{D} \cap \mathcal{E}^m|$,  since the homeomorphism 
$\delta^{-m}$ preserves the intersection  number.  Furthermore, from the last equation in part (I.) before observation  (a), we obtain the expression
$$|\mathcal{D}^m \cap \mathcal{E}|= |k|_{\mathcal E} \, S_{m} + c_{\mathcal E}\, ,$$
where $|k|_{\mathcal E}$ and  $c_{\mathcal E}$ are constants independent of $m$. This allows us to 
conclude that the ``non-linear" exceptional values must satisfy:
$$N_{\mathcal D} = N_{\mathcal E}$$
Thus we can deduce from observation (b) above that the union of the$\mathcal D$-exceptional and the 
$\mathcal E$-exceptional values  consists of maximally 4 elements.  Thus observation (a) above finishes 
the proof.

\end{proof}

\vskip10pt

\begin{remark} \label{twistingdistance2} \rm
 From a careful analysis of the details of the proof of Proposition  \ref{surgerylemma} it seems possible that, under the additional assumption that  at  some intersection point the  $\mathcal D$-length 
of the  intersection arc is not an integer, and similarly  for the $\mathcal E$-length, in the statement of Proposition \ref{surgerylemma} one does not have to exclude  four but only three or even fewer exceptional Dehn twist exponents.   Indeed, statement (a) of that proof may in fact be true also for some of the  exceptional $\mathcal D$-values $m_{-}, \ldots, m_{+}$. 

\end{remark}

\vskip5pt

\section{
Heegaard splittings}\label{applications}

\vskip4pt

Let $H$ be a 3-dimensional handlebody of genus $g \geq 2$, and let $\Sigma = \partial H$ denote its boundary surface.  The set  $\mathcal{D}(H)$ of isotopy classes of  essential simple closed curves 
on $\Sigma$ that bound a disk in $H$ is a subset of $\mathcal{C}^0(\Sigma)$. It is the vertex set 
of what is called the {\it disk complex} of the handlebody $H$,  contained as a subcomplex in
$\mathcal{C}(\Sigma)$.  

\vskip10pt

Similarly, we consider complete decomposing systems, up to isotopy in 
$\Sigma$, which bound disk systems in $H$, and denote the set of such 
isotopy classes by $\mathcal{CDS}(H)$.  As in previous 
sections, we will sometimes omit the distinction between systems of  
curves and their isotopy classes, 
to make the notation easier.

\vskip10pt

The most prominent place where 3-dimensional handlebodies appear in   topology are 
{\it Heegaard splittings} of $3$-manifolds: Let $M$ be a closed   orientable $3$-manifold, 
and let  $\Sigma \subset M$ be a {\it Heegaard surface} of genus $g   \geq 2$. This means 
that $M$ decomposes along $\Sigma$ into two genus $g$  handlebodies  $V$ and $W$, 
so that $M = V \cup_{\Sigma} W$.  

\vskip10pt

The {\it  distance} of a  Heegaard splitting  $M = V \cup_{\Sigma} W$  is defined by
$$d(V, W)   = \min \{d_{\mathcal C}(D, E) \mid D\in \mathcal  {D}(V),  E \in \mathcal {D}(W) \}\, ,$$ 
where $d_{\mathcal C}$ denotes, as before, the distance in the curve   complex $\mathcal{C}(\Sigma)$ (see \cite{He}).

\vskip10pt

\begin{remark} \label{aside}\rm 
Assume that $M$ is irreducible, i.e., every embedded essential $2$-sphere bounds a $3$-ball in $M$.
If  $M$ has a Heegaard splitting  $M = V  \cup_{\Sigma} W$  with  distance $d_{\mathcal C}(V, W) = 0$,  
then the splitting is called {\it  stabilized}. A Heegaard splitting  which satisfies  $d(V, W) \leq 1$ is called {\it weakly reducible}.  Heegaard  splittings with $d(V, W) \geq 2$ have been termed by  Casson-Gordon 
(see ~\cite{CG})  {\it strongly irreducible}. They play an important   role in  $3$-manifold theory.

\end{remark}

\vskip10pt

A well known  and easy  observation states  the following:

\begin{remark} \label{otherdisks} \rm
Given a complete decomposing system 
$${\mathcal D} = \{D_1, ..., D_{3g-3}\} \in \mathcal{CDS}(V)$$ 
for a  handlebody $V$, then any other essential disk-bounding curve  $D\in  \mathcal{D}(V)$ is either parallel to one of  $D_i $, or $D$  has a wave with respect to $\mathcal D\,$ (i.e., $D$ contains a wave  
in one of the pair-of-pants  $P_{j} \subset \Sigma$ complementary to $\mathcal D$, compare 
Definition \ref{smallwave} (c)). 

\end{remark}

One should keep in mind, however, that a curve $D$ on  $\Sigma = \partial  V$ may well contain a wave with  respect to some  ${\mathcal D} \in \mathcal{CDS}(V)$, even if $D$  does not bound a disk  in $V$.

\vskip10pt

A  complete decomposing system  ${\mathcal D} = \{D_1, ...,  D_{3g-3}\}  \subset \Sigma$ is said to  
{\it have a wave with respect to a  second  complete decomposing  system 
${\mathcal E} \subset \Sigma$}  if some of the $D_{i}$ has a wave  with respect to  $\mathcal E$.

\vskip10pt

\begin{lemma}[\cite{He}, Lemma 1.3] \label{quotehempel}  
For every Heegaard splitting of a 3-manifold $M = V \cup_{\Sigma} W$   there always  exists a pair of complete decomposing systems  ${\mathcal D} \in  \mathcal{CDS}(V)$ and 
${\mathcal E \in \mathcal{CDS}(W)}$ which have no waves with respect   to each other.

\end{lemma}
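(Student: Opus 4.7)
My approach is a minimization/surgery argument. Consider all pairs $(\mathcal{D}, \mathcal{E}) \in \mathcal{CDS}(V) \times \mathcal{CDS}(W)$ with both systems made tight with respect to each other on $\Sigma$ (realizing the minimum intersection in their respective isotopy classes), and pick a pair minimizing the intersection number $|\mathcal{D} \cap \mathcal{E}|$. I claim this extremal pair has no waves in either direction, which is what the lemma asserts.

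Suppose for contradiction that some $D_i \in \mathcal{D}$ contains a wave subarc $\alpha \subset P_j$ with $\partial \alpha \subset E_k \in \mathcal{E}$, where $P_j$ is a pair-of-pants of $\Sigma \smsm \mathcal{E}$. I would first select $\alpha$ to be \emph{outermost}: $\alpha$ together with a subarc $\beta \subset E_k$ cobounds a region $R \subset P_j$ whose interior contains no other wave arc of $\mathcal{D} \cap P_j$ based on $E_k$. The plan is then to perform a disk-surgery on $E_k^* \subset W$ using a small pushoff $\alpha' \subset W$ of $\alpha$. Taking a regular neighborhood $N$ of $E_k^* \cup \alpha'$ in $W$ yields a solid-handlebody whose frontier disks in $\text{int}(W)$ are bounded by the two simple closed curves $\gamma_1, \gamma_2$ obtained from $E_k$ by band surgery along $\alpha$; thus both $\gamma_i$ bound disks in $W$. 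Using these new disk-bounding curves, I would construct a new system $\mathcal{E}' \in \mathcal{CDS}(W)$ — either by directly replacing $E_k$ with one of the $\gamma_i$ (when the resulting system is still a pants decomposition with no duplicates), or, if the naive replacement produces a curve isotopic to a neighboring $E_l \in \mathcal{E}$, by accompanying the move with a compensating Whitehead substitution in the four-holed sphere $P_j \cup_{E_k} P_{j'}$ to restore a valid complete decomposing system.

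The main technical obstacle is precisely verifying that at least one such surgery always yields a valid $\mathcal{E}' \in \mathcal{CDS}(W)$ with strictly fewer intersections. A careful case analysis is required based on how $E_k$ sits relative to $P_j$: for example, when $P_j$ has three distinct $\mathcal{E}$-boundary components the naive band-surgery curves $\gamma_i$ are parallel in $\Sigma$ to the other two boundaries of $P_j$, forcing the compensating Whitehead move, whereas when $E_k$ borders $P_j$ on both sides the configurations are more flexible. In each case, the outermost choice of $\alpha$ together with the wave's non-$\partial$-parallelism guarantees that after the move the two intersection points $\partial \alpha \subset \mathcal{D} \cap E_k$ are eliminated while no new essential intersections are introduced by the pushed-off arc $\alpha'$, giving $|\mathcal{D} \cap \mathcal{E}'| < |\mathcal{D} \cap \mathcal{E}|$ and contradicting minimality. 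Iterating terminates because the intersection number is a nonnegative integer, and the symmetric argument handles waves of $\mathcal{E}$ with respect to $\mathcal{D}$.
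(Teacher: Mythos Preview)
The paper does not supply its own proof of this lemma: it is quoted verbatim from Hempel~\cite{He}, Lemma~1.3, and used as a black box, so there is nothing in the present text to compare your argument against.

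That said, your minimize--then--surger strategy is precisely Hempel's, and the obstacle you isolate is the genuine one. When the wave $\alpha$ lies in a pair-of-pants $P_j$ whose three boundary curves are distinct members of $\mathcal{E}$, both band-surgered curves $\gamma_1,\gamma_2$ are parallel in $\Sigma$ to the two other boundary components of $P_j$, so the naive swap produces a duplicate. Your proposed Whitehead replacement in the four-holed sphere $S = P_j \cup_{E_k} P_{j'}$ does work, but for a reason your sketch does not make explicit: removing the disk $E_k^{*}$ merges the two complementary $3$-balls of the disk system in $W$ into a single $3$-ball $B$ with $S \subset \partial B \cong S^{2}$, so \emph{every} essential simple closed curve on $S$ bounds a disk in $B \subset W$. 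One then checks that a suitable such curve can be chosen without introducing new essential intersections with $\mathcal{D}$. The self-adjacent case (where $E_k$ borders $P_j$ on both sides, so that removing $E_k^{*}$ yields a solid torus rather than a ball) is not covered by this reasoning and needs its own short argument; you allude to this case but do not treat it. Modulo completing these checks, your sketch is correct and follows the standard line.
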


\vskip4pt

Consider a complete decomposing system $\mathcal{D} \subset\Sigma$,   and an essential curve 
$E \subset \Sigma$.   There always is an isotopy of $E$ which   makes $E$ {tight} with respect to 
$\mathcal D$, i.e., it eliminates all inessential intersection points  from $E \cap \mathcal{D}$,  
so that $E$ is   cut by  $ \mathcal{D}$ into arcs which are either seams or waves  (see 
Definition \ref{smallwave} and the preceeding discussion). As in  Section 3, we will  consider these arcs only up to {\it isotopy}, by which we  mean  isotopy of the pair $(\Sigma, \mathcal{D})$.

\vskip4pt

Let $\tau \subset \Sigma$ be a maximal train track, i.e., all connected components of 
$\Sigma \smallsetminus \tau$ are  triangles.   Let  $\mathcal{D}$  be a complete decomposing  
system in $\Sigma$ which is  carried by $\tau$.  Then every   connected component $P$ (always a pair-of-pants !) of  $\Sigma \smallsetminus \mathcal D$  contains precisely two of the  triangles 
complementary  to $\tau$.  We consider the following two possibilities:

\begin{enumerate}
\item Every wave in $P$ can be carried by $\tau$, while every  seam can be isotoped  into some of the $I$-fibers of $\tau$. We say  that in this case $P$ has {\it $\Theta$-graph} shape.

\vskip4pt

\item One of the  seams and two non-isotopic  waves can be carried by  $\tau$,  while a third 
wave as well as other two  non-isotopic seams can be isotoped into  some of the $I$-fibers of $\tau$. 
In this case  $P$ is said to have {\it eye glass} shape.

\end{enumerate}

\vskip5pt

\begin{lemma} \label{cdsontt}
Let $\tau \subset \Sigma$ be a maximal train track.   Let   $\mathcal{D}$ be a complete decomposing  system in $\Sigma$ which is   carried by $\tau$.   Then every  pair-of-pants $P$  complementary to 
$\mathcal D$ in $\Sigma$ has either 

\begin{enumerate}
\item[(1)] $\Theta$-graph shape, or 

\item[(2)] eye glasses shape.

\end{enumerate}

\end{lemma}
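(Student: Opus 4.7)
The plan is to extract a finite dual graph from the $I$-fibered structure of $\tau \cap P$ and to classify it combinatorially. Since $\tau \cap P$ is $P$ with the open disks $\Delta_1, \Delta_2$ removed, it is a genus-$0$ surface with five boundary components: the three curves $c_1, c_2, c_3$ of $\partial P$ together with $\partial \Delta_1, \partial \Delta_2$. The $I$-fibration of $\tau$ restricts to an $I$-fibration of $\tau \cap P$ whose fibers have endpoints on $\partial\tau\cap P$ or on $\partial P$, and I want to reduce the statement to a short combinatorial check on the dual graph.

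First I would show that every rectangle $R$ of $\tau$ having one long side on a side $\delta$ of $\Delta_1$ or $\Delta_2$ is contained in $P$, so that its other long side lies on $\partial \Delta_1 \cup \partial \Delta_2$ as well. The two long sides of $R$ lie on $\partial \tau$, which is disjoint from $\partial P$; hence $R$ could cross $\partial P$ only through its $I$-fiber short sides, and tracing what such a crossing would force in the carrying configuration of $\mathcal{D}$ (together with the fact that $\mathcal{D}$ is tight with respect to $\tau$) rules it out. This is the step I expect to be the main obstacle, since it is what pins down that the relevant rectangles genuinely sit inside $P$ rather than straddling $\partial P$.

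Once this is established, I form the dual multigraph $G_P$ on vertex set $\{\Delta_1, \Delta_2\}$, placing an edge for each rectangle whose two long sides lie on $\partial \Delta_1 \cup \partial \Delta_2$. Each vertex then has degree $3$, there are exactly $3$ edges, and a direct enumeration (exactly as in the discussion preceding the lemma) shows that the only such multigraphs are the $\Theta$-graph and the eye-glasses.

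To finish, I would translate each case of the dual graph into the arc-carrying dichotomy. In $\Theta$-graph shape the three rectangles cut $P \smallsetminus (\Delta_1 \cup \Delta_2)$ into three annular regions, one per boundary component $c_i$; the seam $s_{ij}$ is isotopic to an $I$-fiber of the rectangle separating $c_i$ from $c_j$, while the wave $w_i$ is realized as a transverse (horizontal) arc looping around one of the $\Delta_k$ through two adjacent rectangles. In eye-glasses shape the two self-loops are annular neighborhoods of two of the $c_i$, say $c_1$ and $c_2$, while the third $c_3$ lies around the bridge rectangle. Then $w_1, w_2$ are carried as arcs winding around $\Delta_1, \Delta_2$ inside their respective loop rectangles, the seam $s_{12}$ is carried across the bridge, and the remaining wave $w_3$ together with the seams $s_{13}, s_{23}$ are sub-arcs of $I$-fibers of the bridge and loop rectangles, respectively. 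This produces exactly the distribution of three waves versus three seams, or two waves plus one seam versus one wave plus two seams, demanded by the two definitions.
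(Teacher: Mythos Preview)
Your argument has a real gap precisely at the step you flagged as ``the main obstacle.'' The claim that every rectangle $R$ of $\tau$ with one long side on a side of $\Delta_1$ or $\Delta_2$ is contained in $P$ is false in general. The rectangles (branches) of $\tau$ are determined by $\tau$ alone, whereas the pair-of-pants $P$ depends on the carried system $\mathcal{D}$. A branch $R$ can perfectly well have one long side on $\partial\Delta_1\subset P$ and the other on some $\partial\Delta_3$ lying in a different pair-of-pants $P'$; in that case curves of $\mathcal{D}$ run longitudinally through $R$, entering and exiting through the singular-fiber short sides, exactly as ``carried by $\tau$'' allows. There is nothing in tightness or in the carrying configuration to rule this out, so your dual multigraph $G_P$ on $\{\Delta_1,\Delta_2\}$ with edges given by global branches of $\tau$ is not well defined.

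The paper avoids this problem by never referring to global branches. It passes directly to the induced train track $\tau_P=\tau\cap P$ with the restricted $I$-fibration and then collapses the $I$-fibers to obtain the graph $\Gamma_{\tau_P}$ (as in Remark~\ref{traintrack}). The Euler-characteristic count places exactly two triangles in $P$, and the resulting graph is a trivalent spine of $P$, hence the $\Theta$-graph or the eye-glasses. In effect the paper works with the rectangle structure of $\tau_P$ (where the relevant bands automatically live in $P$ by construction) rather than with that of $\tau$. Your classification of the two-vertex, three-edge multigraph and your translation into which waves and seams are carried versus fiber-parallel are correct and match the paper; it is only the construction of $G_P$ from global rectangles that fails. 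If you rebuild $G_P$ from the $I$-fibration of $\tau_P$ itself, your outline goes through and is essentially the paper's argument.
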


\vskip4pt

 \begin{proof} 
Since $\mathcal{D}$ is carried by $\tau$, the train track $\tau$   defines an induced train track 
$\tau_{P} = \tau \cap P$  on each pair-of-pants $P$, such that the $I$-bundle structure 
on $\tau$ restricts to the $I$-bundle structure on $\tau_{P}$. It follows from an easy Euler characteristic count that each   pair-of-pants complementary to $\mathcal D$ must contain precisely two   of the triangles that are complementary to $\tau$.

Consider now the deformation of the induced train track  $\tau_P$ onto the graph 
$\Gamma_{\tau_P}$ as defined in Remark \ref{traintrack}. There are two possible cases.  The first,   
corresponding to case (1), is that  $\Gamma_{\tau_P}$ is a $\Theta$-graph.  The second, corresponding to   case (2), is that  $\Gamma_{\tau_P}$  is composed of two circles connected by an arc:  
it is an ``eye glasses" graph.  

Recall that a wave in $P$ is an arc from  one  boundary component to  itself which is not 
$\partial$-parallel, and that a seam in $P$ is an  arc  connecting two different boundary components of $P$.   It is   easy to  check  that in case (1) all waves are carried by $\tau_P$ and  all  seams are isotopic into the $I$-fibers, so that $P$ has   $\Theta$-graph shape. In case (2) one of the  seams and two 
non-isotopic  waves can be carried by $\tau_P$, while a  third  wave as well as  other two non-isotopic  seams can be isotoped  into some of the $I$-fibers of $\tau_P$: the  pair-of-pants $P$ has an eye glasses shape.

\end{proof}

\vskip4pt

Notice that, contrary perhaps to the impression given above, there is more than one pair-of-pants,
up to homeomorphisms that respect the complementary components and the singular $I$-fibration,
which has eye glasses shape, and more  than one which has $\Theta$-graph shape.  Indeed, 
the homeomorphism type of  such pair-of-pants depends also on the direction of the train track switches at the singular fibers.

\vskip4pt

\begin{remark} \label{secondcase}\rm 
Note that case (2) of Lemma \ref{cdsontt} cannot occur for a   complete fat train track $\tau$ which 
has $\mathcal E$ as exceptional fibers, if $\mathcal E$ does not  have  waves with respect to 
$\mathcal D\,$: Indeed, waves which can be isotoped into the   $I$-fibers of $\tau$ are parallel to arcs on $\mathcal E$ which then would also be waves with respect to   $\mathcal{D}$.

Conversely, if each of the  pair-of-pants complementary to $\mathcal  D$   has
$\Theta$-graph shape, then $\mathcal E$ does not have waves with respect  to $\mathcal D$.

\end{remark}

\vskip4pt

\begin{lemma} \label{carriedwaves}
Let $\tau \subset \Sigma$ be a  complete fat  train track, and let $\tau'  \subset \tau$ be a train 
track derived from $\tau$.  Let $\mathcal{D}$ be a complete decomposing system in $\Sigma$ 
which is carried by $\tau'$,  with the property that every  pair-of-pants complementary to 
$\mathcal  D$ has $\Theta$-graph shape. Let  $D \subset \Sigma$  be an essential simple 
closed curve which is tight with  respect to $\mathcal D$, and assume  that some arc $\beta$ from 
the set of  arcs $D - \mathcal D$  is a wave with respect to  $\mathcal D$. Then $D$  can be carried by 
$\tau$.

\end{lemma}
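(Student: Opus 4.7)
The approach is to adapt the proof strategy of Lemma \ref{disjointcarried2} to our situation, with $\mathcal{D}$ playing the role that $\mathcal{E}_\tau$ plays there, and $\tau'$ as the target for carrying $D$.

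First, I will use the $\Theta$-graph shape hypothesis to establish that $\mathcal{D}$ covers $\tau'$. In each pair-of-pants $P$ complementary to $\mathcal{D}$, the $\Theta$-graph shape of $\tau'_P$ ensures that each of its three branches contains two arcs of $\partial P$ running transverse to its $I$-fibers; consequently every $I$-fiber of $\tau'_P$ is met by $\mathcal{D}$, and since every $I$-fiber of $\tau'$ either lies in some $\tau'_P$ or crosses $\partial P = \mathcal{D}$ directly, $\mathcal{D}$ covers $\tau'$ globally. The proof of Lemma \ref{entireside} uses only the covering property, so it applies verbatim to give the analogous statement: for every complementary triangle $\Delta$ of $\tau'$ and every side $\delta$ of $\Delta$, some arc of $\mathcal{D}$ runs parallel along all of $\delta$.

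Second, using Lemma \ref{cdsontt} case (1), I first isotope $\beta$ (a wave in the $\Theta$-shape pair-of-pants $P_\beta$) to be carried by $\tau'_{P_\beta}$. Then, following the proof of Lemma \ref{disjointcarried2}, I perform an isotopy of $D$ that fixes $\beta$ and all intersection points of $D$ with $\mathcal{D}$, pushing $D$ off all complementary triangles of $\tau'$ while keeping $D$ tight with respect to $\mathcal{D}$ and simple. After this step, $D$ lies in $\tau'$ with possibly finitely many singularities of the $I$-fibration, each located near a cusp of a $\tau'$-triangle $\Delta$. At each such singularity, two arcs of $D$ adjacent to it run parallel to initial segments of the two sides of $\Delta$ meeting at the cusp; by the entire-side property established in the first step, these arcs can be elongated to run parallel to the whole sides and then isotoped across $\Delta$, eliminating the singularity. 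After finitely many such reductions, $D$ is contained in $\tau'$ transverse to its $I$-fibration. Since $\tau' \subset \tau$ and the $I$-fibers of $\tau'$ are sub-arcs of $I$-fibers of $\tau$, this transversality passes to $\tau$, whence $D$ is carried by $\tau$.

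The main obstacle is the technical execution of the singularity-elimination isotopies in the second step: one must verify that each such isotopy can be performed while preserving both the simplicity of $D$ and its tightness with respect to $\mathcal{D}$, that no curve of $\mathcal{D}$ passes between the two "closest to the cusp" arcs and the adjacent triangle sides (so that the entire-side elongation is unambiguous), and that the iterative reduction terminates. These are exactly the subtleties present in the proof of Lemma \ref{disjointcarried2}, with $\mathcal{D}$ replacing $\mathcal{E}_\tau$; the wave $\beta$ plays the same anchoring role, ensuring that the isotopy has a well-defined starting point and does not trivialize $D$.
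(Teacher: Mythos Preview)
Your argument has a genuine gap in the singularity-elimination step. The proofs of Lemma~\ref{disjointcarried} and Lemma~\ref{disjointcarried2} hinge on the fact that the curve being processed is \emph{disjoint} from the reference system that covers the train track: this disjointness is exactly what forces ``no curve from the reference system passes between $\alpha$ and $\delta$'', which in turn lets one elongate the arc $\alpha$ of $D$ along the entire side $\delta$. In your setup the reference system is $\mathcal D$, which certainly intersects $D$ transversally; so a curve of $\mathcal D$ \emph{can} separate the singularity $X$ from the cusp, and the elongation argument collapses. You flag precisely this point in your last paragraph as something ``one must verify'', but there is no mechanism available to verify it --- tightness of $D$ with respect to $\mathcal D$ does not prevent $\mathcal D$-arcs from sitting between $\alpha$ and $\delta$. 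The wave $\beta$ does not help either: it is carried by $\tau'$ but does not cover $\tau'$, so it cannot serve as the covering reference the way $\beta$ does in Lemma~\ref{disjointcarried2}.

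The paper's proof takes a different route that avoids this obstruction entirely. The key observation you are missing is geometric: in a $\Theta$-graph shaped pair-of-pants, any wave carried by $\tau'$ must run parallel to an \emph{entire side} of one of the two complementary triangles of $\tau'$ in $P$. Since $\tau'$ is derived from $\tau$, that side contains a complete unzipping path, so the wave $\beta$ contains a subarc $\beta'$ (with endpoints on $\mathcal E_\tau$) that covers $\tau$ --- the coarser train track, not $\tau'$. Now Lemma~\ref{disjointcarried2} applies verbatim with $\beta'\subset D$ as the covering arc, and the required disjointness is automatic because $D$ is simple. Note that the ``derived'' hypothesis is used here in an essential way, not merely to pass from $\tau'$ to $\tau$ at the end; your proposal never exploits it beyond the trivial inclusion $\tau'\subset\tau$.
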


\vskip10pt

\begin{proof} 
Let $P$ be the pair-of-pants complementary to $\mathcal D$ that   contains the wave $\beta$. By assumption $P$ has $\Theta$-graph  shape,  so that $\beta$ is carried by $\tau'$. We observe that any 
wave in such a $\Theta$-graph shaped pair-of-pants $P$ has to run parallel  on $\tau'$ to at least one entire side of one  of the two  connected  components complementary to $\tau'$ which are contained in 
$P$. But this implies immediately that $\beta$  contains a subarc $\beta'$ that has to run parallel to 
some complete unzipping path which is used to derive $\tau'$ from   $\tau$, and thus $\beta$ covers 
$\tau$.  Note that $\beta'$ has its endpoints on  $\mathcal{E}_{\tau}$. The rest of the proof is now a direct application of Lemma  \ref{disjointcarried2}, applied to the subarc of $\beta'$ of $\beta$.

\end{proof}

\vskip10pt

\begin{theorem} \label{HSdistance}
Let $M$ be an oriented $3$-manifold with a Heegaard splitting $M = V   \cup_{\Sigma} W$. Consider 
complete decomposing systems $\mathcal D \in \mathcal{CDS}(V)$ and  
$\mathcal E \in \mathcal{CDS}(W)$ which do not have waves with   respect to each other.   Let 
$\tau \subset \Sigma$ be a complete fat train track with   exceptional fibers 
$\mathcal{E}_{\tau} = \mathcal E$, and assume that $\mathcal D$ is   carried by $\tau_{n}$, for some
$n$-tower of derived train tracks  $\tau = \tau_{0} \supset \tau_{1}  \supset \ldots \supset \tau_{n}$
with $n \geq 2$.  Then the distance of the  given Heegaard splitting  satisfies: $$d(V, W) \geq n$$

\end{theorem}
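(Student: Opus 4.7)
The plan is to show $d_{\mathcal C}(D', E') \geq n$ for every pair $D' \in \mathcal D(V)$, $E' \in \mathcal D(W)$. I first observe that every $E' \in \mathcal D(W)$ has a wave with respect to $\tau = \tau_0$: by Remark~\ref{otherdisks} applied to $\mathcal E \in \mathcal{CDS}(W)$, either $E'$ is parallel to some $E_j \in \mathcal E = \mathcal E_{\tau_0}$, making $E'$ isotopic to an exceptional fiber of $\tau_0$, or $E'$ has a wave with respect to $\mathcal E$; in both cases Definition~\ref{smallwave}(d) yields the conclusion. The easy case $D' \in \mathcal D$ is then immediate: since $D'$ is carried by $\tau_n$, Corollary~\ref{corlargedistance} applied to $D = D'$ and $E = E'$ delivers $d_{\mathcal C}(D', E') \geq n + 1$.

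For the remaining case $D' \in \mathcal D(V) \setminus \mathcal D$, Remark~\ref{otherdisks} gives that $D'$ has a wave with respect to $\mathcal D$. I would then apply Lemma~\ref{carriedwaves} with $\tau = \tau_0$, where the $\Theta$-graph shape hypothesis on the pair-of-pants complementary to $\mathcal D$ is established via Remark~\ref{secondcase} applied to $\tau_0$ (using that $\mathcal E$ has no waves with respect to $\mathcal D$), together with the preservation of this type under derivation, since the complementary components of each $\tau_i$ within a pair-of-pants remain two triangles by Remark~\ref{sametype}. A careful reading of the proof of Lemma~\ref{carriedwaves}, via its appeal to Lemma~\ref{disjointcarried2}, in fact shows that $D'$ \emph{covers} the derived train track, not merely that it is carried by $\tau_0$. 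Propagating this stronger conclusion across the iterated derivations in the tower $\tau_0 \supset \tau_1 \supset \ldots \supset \tau_n$ yields that $D'$ is carried by $\tau_n$, whence Corollary~\ref{corlargedistance} again produces $d_{\mathcal C}(D', E') \geq n + 1 \geq n$.

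The main obstacle is precisely this propagation step. Lemma~\ref{carriedwaves} is stated for a single derivation from a complete fat train track, so it applies directly only with $\tau' = \tau_1$; the intermediate $\tau_i$ for $1 \leq i \leq n-1$ are not complete fat. One must extend the Lemma's argument to iterated derivations: the idea is that the composite unzipping carrying $\tau_0$ to $\tau_n$ still admits a complete unzipping path along which any wave $\beta$ of $D'$ in a $\mathcal D$-complementary pair-of-pants must run parallel, so that $D'$ covers $\tau_n$. Alternatively, one can combine the one-step conclusion of Lemma~\ref{carriedwaves} with the inductive recursion of Proposition~\ref{towerimpliesdistance} to propagate the carrying property of $D'$ step by step into the deeper train tracks $\tau_2, \ldots, \tau_n$.
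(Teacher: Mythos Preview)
Your overall strategy mirrors the paper's, and your treatment of the $E'$-side as well as the easy case $D' \in \mathcal D$ is correct. The gap is in the remaining case, and you have partly diagnosed it yourself.

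The problem is that you aim too high: you try to show every $D' \in \mathcal D(V)$ is carried by $\tau_n$, and then struggle with the ``propagation''. The paper instead settles for showing $D'$ is carried by $\tau_{n-1}$, which requires only a \emph{single} application of the argument behind Lemma~\ref{carriedwaves}, applied to the pair $(\tau_{n-1}, \tau_n)$ rather than $(\tau_0, \tau_1)$. Concretely: the wave $\beta \subset D'$ is carried by $\tau_n$ (via the $\Theta$-graph shape w.r.t.\ $\tau_n$, as you argue), hence runs parallel to an entire side of some complementary triangle of $\tau_n$; this side contains a complete unzipping path for the derivation $\tau_{n-1} \to \tau_n$, so $\beta$ covers $\tau_{n-1}$; the argument of Lemma~\ref{disjointcarried2} (with $\mathcal E_{\tau_0} = \mathcal E$ still serving as the reference decomposing system --- this is the only place the complete fat hypothesis enters) then gives $D'$ carried by $\tau_{n-1}$. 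Applying Corollary~\ref{corlargedistance} to the $(n-1)$-tower $\tau_0 \supset \cdots \supset \tau_{n-1}$ yields $d_{\mathcal C}(D', E') \geq (n-1)+1 = n$, which is exactly what the theorem claims.

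Your assertion that a careful reading of Lemma~\ref{carriedwaves} shows $D'$ covers the \emph{derived} train track $\tau'$ is not correct: the unzipping-path step in that proof shows the wave covers the \emph{un}derived $\tau$, not $\tau'$ (a complete unzipping path for $\tau \to \tau'$ covers $\tau$, by definition). So the iteration you envision does not come for free --- and, as the paper shows, is not needed.
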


\vskip10pt

\begin{proof}
By hypothesis the system $\mathcal D$ is carried by $\tau_{n}$.  Let  $D \in \mathcal{D}(V)$ be 
any  disk-bounding essential simple closed curve in $V$.  From  Remark \ref{otherdisks} we know 
that either $D \in \mathcal{D}$, or else $D$ contains a wave with  respect to $\mathcal D$.  From Lemma \ref{cdsontt} and Remark  \ref{secondcase} we deduce that this wave is carried by $\tau_{n}$.  
Thus we can apply Lemma \ref{carriedwaves}, and obtain that $D$ is  carried by $\tau_{n-1}$.

On the other hand, any essential disk-bounding simple closed curve $E  \in \mathcal{D}(W)$ in $W$ has a wave with respect to $\tau =  \tau_{0}$, by Definition \ref{smallwave} (d) and Remark  \ref{otherdisks}.  Thus Corollary \ref{corlargedistance} gives the  desired inequality.

\end{proof}

\vskip5pt

\section{Application to 3-manifolds}

\vskip4pt

In this section we derive some applications of Theorem   \ref{HSdistance}.  
There are  other results in the recent literature about 3-manifolds $M$ with Heegaard splitting of large Hempel  distance,  for example  ~\cite{AS}, ~\cite{Ev} and \cite{MMS}. The advantage of our method, seems to us, is that it is very practical and allows in particular  derivation of  concrete lower bounds 
for the  distance of Heegaard splittings in combinatorial terms.

\subsection*{5.A. Application 1:  Heegaard diagrams} ${}^{}$

\vskip20pt

We first describe a practical way how to derive a  lower bound for 
the Hempel distance for a 
$3$-manifold $M$ given by a {\it Heegaard  diagram}: The latter is 
given by a standardly embedded handlebody $W \subset \reals^{3}$ with 
boundary surface $\Sigma = \partial W$, equipped with a complete 
decomposing system $\mathcal D \subset \Sigma$ that defines a second 
handlebody $V$ 
with $\Sigma =  \partial V$. The handlebody $V$ 
(which usually  cannot be embedded in $\reals^{3}$)
is determined by  the condition $\mathcal{D} \in \mathcal{CDS}(V)$, 
i.e. $V$ contains a system of 
essential disks with boundary $\mathcal D$.

\vskip10pt

One first picks at random a complete  decomposing system $\mathcal{E} 
\in \mathcal{CDS}(W)$. 
Then one modifies  the pair $(\mathcal{D, E})$ iteratively to find 
complete decomposing systems
$(\mathcal{D', E'})$ which bound  disks in  the same handlebodies $V$ 
and $W$ and, in addition, 
have no waves with respect to each other.  Since the sets 
$\mathcal{CDS}(V)$ and 
$\mathcal{CDS}(W)$  are countable, Hempel's existence  result for 
such systems given in 
Lemma \ref{quotehempel} implies that  there exists algorithms to find 
such systems $(\mathcal{D', E'})$.

\vskip10pt

For practical purposes,  an efficient procedure seems to be the  passage from the complete 
decomposing systems  $(\mathcal{D, E}) $ (by omitting some of the  curves from  $\mathcal{D}$ and 
$\mathcal{E}$) to {\it minimal decomposing systems} that cut the   surface $\Sigma$ into a single 
simply connected complementary  component,  i.e., a $2g$-punctured $2$-shere:  One can then apply 
{Whiteheads algorithm (see  ~\cite{Ka}  and the references given there)} to strictly reduce intersection  number until the minimum  is achieved. Adding the other disks back in, needed to make the minimal decomposing systems  complete, gives good candidates for systems $(\mathcal{D',  E'})$ that have 
no waves with respect to each other.

\vskip10pt

Once such systems $(\mathcal{D', E'})$ are found, one can easily  determine  a complete fat train track 
$\tau$  with $\mathcal{E}_{\tau} = \mathcal{E'}$   that carries  $\mathcal D'$, by considering the intersection arcs on  $\mathcal D'$  with the pair-of-pants on $\Sigma$ that are  complementary to the 
system $\mathcal E'$, compare Lemma \ref{fatcarries} and its proof. Next one  starts splitting 
$\tau_{0} = \tau$  iteratively until every unzipping path is  complete (or the weaker condition from Remark \ref{weaklyderived} is   satisfied), to obtain the (weakly) derived train track $\tau_{1}$ 
with  respect to $\mathcal D'$. One then splits again $\tau_{1}$, to  obtain 
a (weakly) derived train track $\tau_{2}$,  and so on, until the first  collision arises.  The last derived train track  $\tau_{n}$ constructed before the collision gives the desired   distance bound:

\vskip10pt

\begin{proposition} \label{Heegaarddiagram}
If a Heegaard splitting $V \cup_{\Sigma} W$  is given by a Heegaard  diagram as above, and  if the above construction yields a train track  $\tau_{n}$ that  carries $\mathcal{D}'$, then the Hempel distance of  the Heegaard  splitting satisfies: $$d(V, W) \geq n$$

\end{proposition}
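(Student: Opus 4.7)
My plan is to reduce the proposition directly to Theorem \ref{HSdistance}. The algorithmic procedure described in the paragraphs preceding the proposition is designed precisely to manufacture the data required by that theorem, so the proof is essentially a verification that each step of the construction matches a corresponding hypothesis.

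First, I would observe that the output of the Whitehead-style iterative modification is a pair $(\mathcal{D}', \mathcal{E}')$ of complete decomposing systems with $\mathcal{D}' \in \mathcal{CDS}(V)$, $\mathcal{E}' \in \mathcal{CDS}(W)$, and no waves with respect to each other. Existence of such pairs is guaranteed by Lemma \ref{quotehempel}, and the termination criterion of the algorithm is exactly the no-wave condition; I would note that no curve of $\mathcal{D}'$ can then be isotopic to any curve of $\mathcal{E}'$ (otherwise such a curve would force waves upon inspection of the complementary pair-of-pants, contradicting the termination condition).

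Second, I would check that the complete fat train track $\tau = \tau_0$ built by filling each pair-of-pants complementary to $\mathcal{E}'$ with $I$-fibered rectangles according to the seam pattern of $\mathcal{D}'$ is indeed a complete fat train track with $\mathcal{E}_{\tau_0} = \mathcal{E}'$ that carries $\mathcal{D}'$. This is precisely the statement of Lemma \ref{fatcarries}, whose hypotheses (no waves of $\mathcal{D}'$ with respect to $\mathcal{E}'$, no parallelism) are guaranteed by the previous step. The iterative unzipping procedure then constructs the tower $\tau_0 \supset \tau_1 \supset \ldots \supset \tau_n$ of (weakly) derived train tracks with respect to the lamination $\mathcal{L} = \mathcal{D}'$, stopping at the last stage $\tau_n$ reached before a collision; the hypothesis of the proposition is that $\tau_n$ still carries $\mathcal{D}'$.

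The conclusion $d(V,W) \geq n$ then follows by a direct application of Theorem \ref{HSdistance} (in the weakly derived variant, which is justified by Remark \ref{weaklyderived}) applied to $\mathcal{D}'$, $\mathcal{E}'$, and the tower $\tau_0 \supset \cdots \supset \tau_n$. The cases $n = 0, 1$, to which Theorem \ref{HSdistance} does not literally apply, are handled trivially: $d(V,W) \geq 0$ is automatic, while $d(V,W) \geq 1$ follows from the fact that no curve of $\mathcal{D}'$ is isotopic to a curve of $\mathcal{E}'$, so the vertex sets $\mathcal{D}(V)$ and $\mathcal{D}(W)$ are disjoint. The main difficulty in the plan, such as it is, lies not in any single step but in confirming that the informal algorithmic description genuinely produces the formal input required by Theorem \ref{HSdistance}; once this matching is done, the proposition is immediate.
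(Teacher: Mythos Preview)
Your approach is exactly the paper's: the proof there is a single sentence stating that the proposition is an immediate consequence of the construction and Theorem \ref{HSdistance}. Your more detailed verification that each step of the construction produces the hypotheses of Theorem \ref{HSdistance} is correct and matches the intended argument.

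One small gap: your handling of the case $n=1$ is not valid. From the fact that no curve of $\mathcal{D}'$ is isotopic to a curve of $\mathcal{E}'$ you conclude that the full disk complexes $\mathcal{D}(V)$ and $\mathcal{D}(W)$ are disjoint, but this does not follow. A disk-bounding curve in $V$ need not belong to the particular system $\mathcal{D}'$, and similarly for $W$; the no-wave condition between two specific complete decomposing systems does not by itself rule out a common disk. The paper does not address this case separately either (Theorem \ref{HSdistance} is stated for $n \geq 2$), so in practice the proposition should be read with $n \geq 2$ in mind, and your trivial handling of $n=0$ suffices for the remaining edge case.
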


\vskip10pt

\begin{proof}
This is an immediate consequence of the above construction and   Theorem \ref{HSdistance}.

\end{proof}

\vskip5pt

\subsection*{5.B. Application 2:  large distance via surgery} ${}^{}$

\vskip10pt

We now describe a practical way how to construct 3-manifolds with  Heegaard splittings of arbitrary high 
distance.

\vskip5pt

Let $\Sigma$ be a surface of genus $g \geq 2$, and let $\tau \subset  \Sigma$ be a complete fat train 
track  with associated complete decomposing system $\mathcal{E} =  \mathcal{E}_{\tau}$. One successively derives (or weakly derives)  train tracks   
$\tau_{0} = \tau, \tau_{1}, \tau_{2}, \ldots,  \tau_{n}$, and then chooses a curve $k \subset \Sigma$ that covers $\tau_{n}$. We note  that, since $k$ is carried by $\tau_{n}$ and hence by  $\tau_{0}$, it  does not contain waves with respect to $\mathcal E$.  Let $\delta_{k}$ denote the Dehn twist on $\Sigma$ along the  curve  $k$.

\vskip10pt

\begin{proposition} \label{firstsurgery}
The Heegaard splitting $V \cup_{\Sigma} W$, defined by the two  complete decomposing  systems 
$\mathcal E$ and $\mathcal{D} = \delta_{k}^m(\mathcal{E})$ on $\Sigma$  via the conditions 
$\mathcal{D} \in \mathcal{CDS}(V)$ and $\mathcal{E} \in  \mathcal{CDS}(W)$,  has distance 
$$d(V, W) \geq n\, ,$$ for any integer $m \in \mathbb{Z} \smallsetminus \{1, 0, -1\}$.

\end{proposition}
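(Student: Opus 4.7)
The plan is to verify the two hypotheses of Theorem \ref{HSdistance} for the pair $(\mathcal{D}, \mathcal{E})$ together with the given $n$-tower $\tau = \tau_0 \supset \tau_1 \supset \ldots \supset \tau_n$: namely, that $\mathcal{D}$ is carried by $\tau_n$, and that $\mathcal{D}$ and $\mathcal{E}$ do not have waves with respect to each other.

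The key preliminary observation is that $k$ covers $\tau_0 = \tau$. Indeed, $k$ is carried by $\tau_n$ by assumption, so iterated application of Lemma \ref{carriesimpliesunzip} down the tower (at each step, $\tau_i$ is derived from $\tau_{i-1}$ and $k$ is carried by $\tau_i$) shows that $k$ covers every $\tau_i$. Remark \ref{fillingpants} then guarantees that $k$ fills every pair-of-pants complementary to $\mathcal{E} = \mathcal{E}_\tau$ and contains no wave with respect to $\mathcal{E}$.

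For the carrying hypothesis, fix any $E_i \in \mathcal{E}$ and set $D_i = \delta_k^m(E_i)$. At any intersection point $P \in D_i \cap k$, Lemma \ref{twistingnumber}(a) gives
$$tw_P(D_i, k) = |tw_P(E_i, k) + \varepsilon m|$$
for some $\varepsilon \in \{\pm 1\}$. Since $E_i$ is itself a curve of $\mathcal{E}$, the maximal arc on $E_i$ through $P$ that runs parallel to an arc on $k$ cannot essentially wrap around $k$ (it is confined to the neighborhood of the single exceptional fiber of $\tau$ corresponding to $E_i$), so $tw_P(E_i, k) < 1$. A brief case check on the sign $\varepsilon$ then forces $tw_P(D_i, k) > 1$ whenever $|m| \geq 2$. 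Lemma \ref{twistingnumber}(b), applied with $k$ covering the derived train track $\tau_n$, now yields that $D_i$ also covers $\tau_n$ (hence is carried by it) and has no wave with respect to $\mathcal{E}$. Running the same argument with $\mathcal{D}$ and $\mathcal{E}$ interchanged, $m$ replaced by $-m$, and the tower replaced by its $\delta_k^m$-image $\delta_k^m(\tau_0) \supset \ldots \supset \delta_k^m(\tau_n)$ --- still an $n$-tower of derived train tracks, whose initial complete fat train track has exceptional fibers $\mathcal{D}$, and whose final member is covered by $k = \delta_k^m(k)$ --- shows symmetrically that each $E_i \in \mathcal{E}$ has no wave with respect to $\mathcal{D}$.

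With both hypotheses of Theorem \ref{HSdistance} in place, the theorem delivers $d(V, W) \geq n$. The main obstacle is the somewhat degenerate computation of the twisting number $tw_P(E_i, k)$ when $E_i$ itself belongs to the reference decomposing system $\mathcal{E}$; this is precisely what pins down the sharp exception set $\{-1, 0, 1\}$ and explains why only three exponents must be excluded here, as opposed to the four permitted in the general Proposition \ref{surgerylemma}. (One can alternatively exploit the obvious $m \mapsto -m$ symmetry of the configuration to conclude that the exception set from Proposition \ref{surgerylemma} must be symmetric about $0$, hence of odd cardinality at most $3$.)
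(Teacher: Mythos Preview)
Your proof is correct and follows essentially the same route as the paper's: compute the twisting number of $E_i \in \mathcal E$ along $k$, apply Lemma \ref{twistingnumber}(a),(b) to deduce that $\mathcal D = \delta_k^m(\mathcal E)$ covers $\tau_n$ and has no waves with respect to $\mathcal E$, use the symmetry $\mathcal E = \delta_k^{-m}(\mathcal D)$ for the converse wave-freeness, and conclude via Theorem \ref{HSdistance}. The paper is marginally sharper in observing that $tw_P(E_i, k) = 0$ exactly (an arc on $E_i \in \mathcal E$ has $\mathcal E$-length zero), giving $tw_P(\delta_k^m(E_i), k) = |m|$ without the sign case-check you perform, but this is a cosmetic difference.
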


\vskip10pt

\begin{proof}
We first observe that for any $D \in \mathcal D$ and any intersection  point $P \in D \cap k$ the twisting number of $D$ along $k$ at $P$ satisfies $tw_{P}(D, k) = 0$. Hence one can apply Lemma 
\ref{twistingnumber} (a) directly to deduce that  $tw_{P}(\delta_{k}^m(D), k) =  |m|$, for any 
$m \in  \mathbb{Z}$. Thus Lemma \ref{twistingnumber} (b)  yields

that for every value of $m \in \mathbb{Z}  \smallsetminus \{1, 0, -1\}$ the curve system
$\mathcal{D} = \delta_{k}^m(\mathcal{E})$ is carried by $\tau_{n}$. In particular, it follows that  
$\mathcal D$ does not contain waves  with respect to  $\mathcal E$, and by symmetry of the construction  (i.e.  $\mathcal{E} = \delta_{k}^{-m}(\mathcal{D})$), the vice versa 
assertion is also true.  Hence Theorem \ref{HSdistance} applies to   give the claimed inequality.

\end{proof}

\vskip10pt

The above proof of Proposition \ref{firstsurgery} should be  compared to the proofs of both, 
Theorem 3.1  of  ~\cite{MMS} and Theorem 1.1 of ~\cite{Ev}.

\vskip10pt

It is a well known fact that changing the gluing map of a given   Heegaard splitting 
$M = V \cup_{\Sigma} W$ by an m-fold Dehn twist along a curve $k \subset \Sigma$ is  
equivalent to performing  $\frac{1}{m}$-Dehn surgery on $k$ along the slope determined by $\Sigma$ on a regular   neighborhood $N(k)$.   We call this a {\it  $\Sigma$-horizontal}  $\frac{1}{m}$-surgery on $k$ (compare \cite{LM}),  and the obtained manifold is denoted by   $M^\Sigma_{k}(\frac{1}{m})$ .

\vskip10pt

In the situation considered in Proposition  \ref{firstsurgery} the  manifold $M$ in question is  a connected sum of $g$ copies of  $S^{1}  \times  S^{2}$, provided with the standard Heegaard splitting of 
genus $g$. We will extend the construction in the next subsection to obtain an   analogous result for  knots $k$ in $S^{3}$.   Note, however, that in the case where $M$ is the connected sum of copies 
of  $S^{1} \times  S^{2}$, the alternative method described in \cite{MMS} does not  apply without additional modifications.

\vskip5pt

\subsection*{5.C. Application 3:  surgery on knots in $S^{3}$} ${}^{}$

\vskip10pt

Let $S^{3} = V \cup_{\Sigma} W$ be the standard Heegaard  decomposition  of the 
$3$-sphere of genus $g \geq 2$. Let $\mathcal{D} \in  \mathcal{CDS}(V)$ and 
$\mathcal{E} \in \mathcal{CDS}(W)$  be complete decomposing systems.
Let  $c \subset \Sigma$ be a  curve that has no waves with respect to  either $\mathcal D$ or  
$\mathcal E$, and which  fills every pair-of-pants $P$ complementary to either $\mathcal D$ or 
$\mathcal E$ (compare  Remark \ref{fillingpants}). Examples for such  $\mathcal{D,  E}$ and $c$ are not 
hard to find; see ~\cite{LM}  or \cite{MMS} for such examples with arbitrary large  
$g = \hbox{\rm genus} (\Sigma)$. We denote by $\tau_{\mathcal D}(c)$ and $\tau_{\mathcal E}(c)$ the 
two maximal train tracks that carry $c$ and have $\mathcal D$ and  $\mathcal E$ respectively as exceptional fibers.

\vskip5pt

We now consider an arbitrary minimal-maximal lamination $\mathcal{L}  \subset \Sigma$. 
We deduce from Lemma \ref{twistingnumber} and Remark  \ref{laminationversuscurve} (b)
that for all sufficiently large integers $t \geq 0$ the Dehn twist  $\delta_{c}^t$ gives a lamination 
$\mathcal{L}' = \delta_{c}^t(\mathcal{L})$ that covers both, $\tau_{\mathcal D}(c)$ and 
$\tau_{\mathcal E}(c)$. In particular, $\mathcal L'$  has no wave with respect to either $\mathcal D$ or 
$\mathcal{E}$.  

\vskip5pt

Next one derives successively train tracks  
$\tau_{0} =  \tau_{\mathcal E}(c), \tau_{1}, \tau_{2}, \ldots, \tau_{n}$ with respect  to  $\mathcal L'$ and then chooses a curve $k$ on $\Sigma$ that  covers both, $\tau_{n}$ and  $\tau_{\mathcal D}(c)$. Any curve  that is sufficiently close to $\mathcal L'$ (in the space of  projective measured laminations) will have this property. We note  that, since $k$ covers $\tau_{n}$  and hence also $\tau_{0}$, it fills every pair-of-pants complementary  to $\mathcal E$ and does not contain waves with respect to 
$\mathcal E$. Similarly, since $k$ covers $\tau_{\mathcal D}(c)$, it  fills every pair-of-pants complementary  to $\mathcal D$ and does also not contain waves with respect to  $\mathcal D$. 
Thus all conditions  of Proposition \ref{surgerylemma} are satisfied, so that the  resulting systems  
$\mathcal{D}^m = \delta_{k}^m(\mathcal{D})$ and $\mathcal{E}$ do not  have  waves with respect 
to each other, and $\mathcal{D}^m$ is carried by $\tau_{n}$. Thus we can apply Theorem 
\ref{HSdistance}.

\vskip5pt

It remains to recall the above observation that the $m$-fold Dehn twist  at $k$ amounts precisely to 
$\Sigma$-horizontal $\frac{1}{m}$-surgery on $k$.

\vskip5pt

Thus we have proved the following result, which has been stated in a  slightly weakened form at the beginning of the Introduction:

\begin{theorem} \label{secondsurgery}
For any integer $n \geq 0$ there exist a knot $k \in S^{3}$ and an integer $m_{0}$, such that for any  
$m \in \integers \smallsetminus \{m_{0}, m_{0}+1, m_{0} +   2, m_{0}+3\}$  the $3$-manifold
$M^\Sigma_{k}(\frac{1}{m})$, obtained by $\Sigma$-horizontal $\frac{1}{m}$-surgery on $k$,
admits a  Heegaard splitting $M^\Sigma_{k}(\frac{1}{m}) = V  \cup_{\Sigma} W$ which satisfies
$$d(V, W) \geq n \, .$$

\end{theorem}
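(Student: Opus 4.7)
The plan is to assemble the tools developed in Sections 2--4 (the tower machinery of Proposition \ref{towerimpliesdistance}, the twisting analysis of Proposition \ref{surgerylemma}, and the Heegaard-splitting distance bound Theorem \ref{HSdistance}) into a surgery construction on the standard genus-$g$ splitting of $S^{3}$. Fix $S^{3} = V \cup_{\Sigma} W$ of genus $g \geq 2$ and choose complete decomposing systems $\mathcal{D} \in \mathcal{CDS}(V)$ and $\mathcal{E} \in \mathcal{CDS}(W)$, together with an auxiliary curve $c \subset \Sigma$ that fills every pair-of-pants of both $\mathcal{D}$ and $\mathcal{E}$ and has no waves with respect to either system. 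By Remark \ref{fillingpants}, $c$ determines unique complete fat train tracks $\tau_{\mathcal{D}}(c)$ and $\tau_{\mathcal{E}}(c)$ with exceptional fibers $\mathcal{D}$ and $\mathcal{E}$, each carrying (indeed, covered by) $c$.

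Next I would manufacture the depth needed for the tower. Pick a minimal-maximal lamination $\mathcal{L}$ and form $\mathcal{L}' = \delta_{c}^{t}(\mathcal{L})$. By part (b) of Lemma \ref{twistingnumber} combined with Remark \ref{laminationversuscurve}(b), for $t$ large enough the lamination $\mathcal{L}'$ covers both $\tau_{\mathcal{D}}(c)$ and $\tau_{\mathcal{E}}(c)$. Setting $\tau_{0} = \tau_{\mathcal{E}}(c)$, I iteratively unzip along $\mathcal{L}'$ to build the $n$-tower of derived train tracks $\tau_{0} \supset \tau_{1} \supset \cdots \supset \tau_{n}$. Then I pick a knot $k \subset \Sigma$ that is sufficiently close to $\mathcal{L}'$ in $\mathcal{PML}(\Sigma)$ so that $k$ simultaneously covers both $\tau_{n}$ and $\tau_{\mathcal{D}}(c)$; any curve carried by a train track sufficiently close to $\mathcal{L}'$ will do.

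Now I would feed the pair $(\mathcal{D}, \mathcal{E})$ and the curve $k$ into Proposition \ref{surgerylemma}: the hypotheses (that $k$ is tight with respect to both systems, fills every complementary pair-of-pants, and has no waves with respect to either) are satisfied precisely because $k$ covers $\tau_{\mathcal{D}}(c)$ and $\tau_{0} = \tau_{\mathcal{E}}(c)$. The proposition then produces an integer $m_{0}$ such that for every $m \in \integers \setminus \{m_{0}, m_{0}+1, m_{0}+2, m_{0}+3\}$ the system $\mathcal{D}^{m} = \delta_{k}^{m}(\mathcal{D})$ and $\mathcal{E}$ have no mutual waves, and moreover $\mathcal{D}^{m}$ is carried by $\tau_{n}$ (since $k$ covers $\tau_{n}$, part (b) of the proposition applies). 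All hypotheses of Theorem \ref{HSdistance} are now met, yielding $d(V,W) \geq n$ for the splitting obtained by pre-composing the gluing map of $S^{3} = V \cup_{\Sigma} W$ with $\delta_{k}^{m}$.

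Finally, I would invoke the standard identification, recalled in Subsection 5.B, that this change of gluing map is exactly $\Sigma$-horizontal $\tfrac{1}{m}$-Dehn surgery on $k$, so the resulting manifold is $M^{\Sigma}_{k}(\tfrac{1}{m})$ with the claimed distance bound. The main obstacle I expect is the simultaneous control of the two ``coordinate systems'' $\mathcal{D}$ and $\mathcal{E}$: one must choose $k$ covering both $\tau_{n}$ and $\tau_{\mathcal{D}}(c)$, and verify that the twisting-number bookkeeping in the proof of Proposition \ref{surgerylemma} respects the symmetry between $\mathcal{D}$ and $\mathcal{E}$ closely enough that only four exceptional twist parameters need be discarded; the role of the preliminary twist $\delta_{c}^{t}$ is precisely to secure this symmetric covering property via Lemma \ref{twistingnumber}(b).
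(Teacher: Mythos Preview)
Your proposal is correct and follows the paper's own argument essentially step for step: the same choice of $\mathcal{D},\mathcal{E},c$, the same twisted lamination $\mathcal{L}'=\delta_c^t(\mathcal{L})$ to build the $n$-tower over $\tau_{\mathcal E}(c)$, the same selection of $k$ close to $\mathcal{L}'$ covering both $\tau_n$ and $\tau_{\mathcal D}(c)$, then Proposition~\ref{surgerylemma} followed by Theorem~\ref{HSdistance} and the surgery identification. There is no substantive difference in approach.
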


\qed

\vskip5pt

We would like to comment on the relationship between this result and  Theorem 3.1 of \cite{MMS}: 
If a Heegaard splitting $M = V  \cup_{\Sigma} W$ is obtained through  Dehn filling along a curve 
$k \subset \Sigma \subset M$  from a Heegaard splitting of the knot exterior 
$E(k) = M \smallsetminus  \overset{\circ}{N}(k)$, then the distance of the latter splitting is bounded 
below by $d(V, W)$. The converse inequality, however, does not  hold. On the other hand, if $k$ is not primitive on either $V$ or  $W$ (as may well occur in our examples above),  then there is no Heegaard splitting of $E(k)$ that induces the  given splitting of $M$. Thus, although mathematically close, there is 
no direct implication either from Theorem 3.1 of \cite{MMS} to the  above Theorem \ref{secondsurgery}, nor conversely.

\vskip20pt


\begin{thebibliography}{99}
 
 \vskip8pt
 
\bibitem{AS}
A. Abrams, S. Schleimer, \emph{ Distances of Heegaard splittings}, 
Geometric Topology, \textbf{9} (2005), 95 - 119.  

 
\bibitem{CB}
A.Casson, S. Bleiler \emph{Automorphisms of surfaces after Nielsen 
and Thurston},
Cambridge University Press, Cambridge, 1988.
 
\bibitem{CG}
A. Casson, C. Gordon, \emph{Reducing Heegaard splittings},  
Topology and  its Applications, \textbf{27} (3) (1987), 275 - 283. 

\bibitem{Ev}
T. Evans, \emph{High distance Heegaard splittings of $3$-manifolds},
Topology and its Applications,   \textbf{153} (14) (2006), 2631 - 
2647. 

 \bibitem{Har}
 W. Harvey, \emph{ Boundary structure of the modular group}. In 
Riemann 
surfaces and related topics: Proceedings of the 1978 Stony Brook 
Conference 
(State Univ. New York, Stony Brook, N.Y., 1978), pages 245 - 251, 
Princeton, 
N.J., 1981. Princeton Univ. Press. 

\bibitem{He} 
J. Hempel, \emph{3-manifolds as viewed from the curve complex},  
Topology \textbf{40} (3) (2001), 631 - 657.

\bibitem{Ka} 
T. Kaneto, \emph{On Genus 2 Heegaard Diagrams for the 3-Sphere},  
Trans. Amer. Math. Soc. \textbf{276} (2) (1983), 583 - 597.

\bibitem{Kl}
E. Klarreich, \emph{The boundary at infinity of the curve complex and 
the relative Teichmuller space}, Preprint. 

\bibitem{Ko}
T. Kobayashi, \emph{Heights of simple loops and pseudo-Anosov 
homeomor- 
phisms}. In Braids (Santa Cruz, CA, 1986), pages 327 - 338. Amer. 
Math. Soc., 
Providence, RI, 1988.

\bibitem{LM}
M. Lustig, Y. Moriah, \emph{A finiteness result for Heegaard  
splitting}, 
Topology \textbf{43} (2004), 1165  - 1182.

\bibitem{MM}
H. Masur, Y. Minsky, {\it    Geometry of the complex of curves I: 
hyperbolicity }, 
Inventiones. Mathematicae \textbf{138} (1) (1999), 103 - 149. 

\bibitem{MM1}
H. Masur, Y. Minsky, \emph{Geometry of the complex of curves II: 
hierarchical structure}, Geom. Funct. Anal. \textbf{10} (4) (2000),  
902 - 974. 

\bibitem{MMS}
Y. Minsky, Y. Moriah, S. Schleimer, \emph{High distance knots}, 
to appear in AGT. arXiv:math.GT/0607265. 

\bibitem{Sc}
S. Schleimer, \emph{Notes on the  complex of curves}, Preprint.\\
http://www.math.rutgers.edu/ $\sim$saulsch/Maths/notes.pdf


\end{thebibliography}
\end{document}